\newtheorem{theorem}{Theorem}[section]
\newtheorem{prob}{Problem}[section]
\newtheorem{lemma}{Lemma}[section]
\newtheorem{conj}{Conjecture}[section]
\theoremstyle{definition}
\begin{document}
\title{A spectral Erd\H{o}s-P\'{o}sa Theorem\footnote{Supported by the National Natural Science Foundation of China (Nos.\,12171066, 11971445),
 Anhui Provincial Natural Science Foundation (No.\,2108085MA13) and Henan Provincial Natural Science Foundation (No.\,202300410377).}}

\author{{\bf Mingqing Zhai$^{a}$},
{\bf Ruifang Liu$^{b}$}\thanks{Corresponding author. E-mail addresses: mqzhai@chzu.edu.cn (M. Zhai); rfliu@zzu.edu.cn (R. Liu).}
\\
{\footnotesize $^a$ School of Mathematics and Finance, Chuzhou University, Chuzhou, Anhui 239012, China} \\
{\footnotesize $^b$ School of Mathematics and Statistics, Zhengzhou University, Zhengzhou, Henan 450001, China}}

\date{}
\maketitle
{\flushleft\large\bf Abstract}
A set of cycles is called independent if no two of them have a common vertex.
Let $S_{n, 2k-1}$ be the complete split graph, which is the join of a clique of size $2k-1$ with an independent set of size $n-2k+1$.
In 1962, Erd\H{o}s and P\'{o}sa established the following edge-extremal result: for every graph $G$ of order $n$ which contains no $k$ independent cycles, where $k\geq2$ and $n\geq 24k$, we have $e(G)\leq (2k-1)(n-k),$ with equality if and only if $G\cong S_{n,2k-1}.$
In this paper, we prove a spectral version of Erd\H{o}s-P\'{o}sa Theorem. Let $k\geq1$ and $n\geq \frac{16(2k-1)}{\lambda^{2}}$ with $\lambda=\frac1{120k^2}$. If $G$ is a graph of order $n$ which contains no $k$ independent cycles, then $\rho(G)\leq \rho(S_{n,2k-1}),$ the equality holds if and only if $G\cong S_{n,2k-1}.$
This presents a new example illustration for which edge-extremal problems have spectral analogues. Finally, a related problem is proposed for further research.

\begin{flushleft}
\textbf{Keywords:} Spectral extrema, Independent cycles, Adjacency matrix, Spectral radius
\end{flushleft}
\textbf{AMS Classification:} 05C50; 05C35

\section{Introduction}

Given a graph $H$, a graph is said to be \emph{$H$-free},
if it does not contain a subgraph isomorphic to $H$.
The \emph{Tur\'{a}n number} of $H$, denoted by $ex(n,H),$
is the maximum number of edges in an $H$-free graph of order $n.$
An $H$-free graph on $n$ vertices with $ex(n,H)$ edges is called
an \emph{extremal graph} for $H.$
A main purpose of studying Tur\'{a}n numbers is that they are very useful for Ramsey theory, where the original
statements can be seen in \cite{D}.
Tur\'{a}n-type problem can be traced back to Mantel's theorem \cite{MW} in 1907,
which states that $ex(n,C_3)=\lfloor\frac{n^2}4\rfloor$.
In 1941, Tur\'{a}n \cite{bb} showed that $ex(n,K_{k+1})=e(T_{n,k})$ for $n\geq k+1$,
and the $k$-partite Tur\'{a}n graph $T_{n,k}$ is the unique extremal graph for $K_{k+1}$.
Since then, determining the Tur\'{a}n number of a given subgraph $H$ has become a classic problem in extremal graph theory.
However, it is usually very difficult to determine the exact value of $ex(n,H).$

A set of subgraphs is called independent if no two of them have a common vertex.
Particularly, we denote by $kH$ the graph consisting of $k$ independent copies of $H$.
In 1959, Erd\H{o}s and Gallai \cite{EG} determined the maximum number of edges of a graph which contains no $k$ independent edges.
Furthermore, in 1962, Erd\H{o}s \cite{E} proved the maximum number of edges of a graph
which contains no $k$ independent triangles for $n>400(k-1)^{2}$.
Subsequently, Moon \cite{M} showed that Erd\H{o}s's result was still valid whenever
$n>\frac{9}{2}k-\frac{1}{2}.$
Meanwhile, from the perspective of generalizing Tur\'{a}n's theorem,
Simonovits \cite{SIM} and Moon \cite{M} determined the Tur\'{a}n number of $kK_{t}$ for $t\geq3$ and sufficiently large $n$.
Very recently, Chen, Lu and Yuan \cite{CLY} proved the Tur\'{a}n number of $2K_{t}$ for $t\geq3$ and $n\geq 2t.$
The Tur\'{a}n number $ex(n,H)$ is also determined for some other independent subgraphs, see for example, $ex(n,kP_3)$ \cite{BK,LT} and $ex(n,kK_{1,k})$ \cite{LAN}.
More extremal results on independent subgraphs can be seen in \cite{BHC,GER,GOR}.

On the other hand, from the perspective of cycle subgraphs,
F\"{u}redi and Gunderson \cite{FG} determined $ex(n, C_{2k+1})$ for all $k$ and $n$ and characterized the corresponding extremal graphs.
More precisely, $ex(n, C_{2k+1})=\lfloor\frac{n^2}{4}\rfloor$ for $n\geq4k-2$.
However, the exact value of $ex(n, C_{2k})$ is still
open even for $k=2.$ One can refer to \cite{FS} for more results on even cycles.
A variation of Tur\'{a}n-type problem asks what is the maximum number of edges in a graph which contains no $k$ independent cycles.
In 1962, Erd\H{o}s and P\'{o}sa \cite{EP} investigated this extremal problem and characterized the unique extremal graph as follows.

For $n>2k-1$, let $S_{n,2k-1}$ be the join of a clique on $2k-1$ vertices with an independent set of $n-2k+1$ vertices.
This kind of graphs is usually called \emph{complete split graphs},
or \emph{threshold graphs}.

\begin{theorem}[\!\cite{EP}]\label{thm1.1}
Let $k\geq2$ and $n\geq 24k$.
Suppose that $G$ is a graph of order $n$ which contains no $k$ independent cycles. Then
\begin{eqnarray*}\label{eq1}
e(G)\leq (2k-1)(n-k),
\end{eqnarray*}
the equality holds if and only if $G\cong S_{n,2k-1}.$
\end{theorem}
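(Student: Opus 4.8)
The plan is to run an induction on $n$ (for each fixed $k\geq 2$), using the Corr\'{a}di--Hajnal theorem to peel off a low-degree vertex in the inductive step and treating the base case $n=24k$ by a direct analysis of a maximum packing of vertex-disjoint cycles.

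For the inductive step, recall the Corr\'{a}di--Hajnal theorem: every graph on at least $3k$ vertices with minimum degree at least $2k$ contains $k$ independent cycles. Since $n\geq 24k\geq 3k$, a graph $G$ of order $n$ with no $k$ independent cycles must have a vertex $v$ with $d(v)\leq 2k-1$. When $n>24k$, the graph $G-v$ still has no $k$ independent cycles and has order $n-1\geq 24k$, so $e(G-v)\leq (2k-1)(n-1-k)$ by induction and hence $e(G)=e(G-v)+d(v)\leq (2k-1)(n-k)$. For the equality clause of the step, if $e(G)=(2k-1)(n-k)$ then $d(v)=2k-1$ and, by the inductive characterization, $G-v\cong S_{n-1,2k-1}$; let $K$ be its $(2k-1)$-clique and $I$ its independent set. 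I would then show $N_G(v)=K$: otherwise $v$ has a neighbour $w\in I$, and since $S_{n-1,2k-1}$ contains $k-1$ independent triangles on any prescribed $2k-2$ vertices of $K$ together with $k-1$ independent-set vertices distinct from $w$ (here $n\geq 24k$ is used), a triangle or $4$-cycle through $v$ and $w$ — via a clique-neighbour of $w$, respectively via a second $I$-neighbour of $v$ — is disjoint from those $k-1$ triangles and produces $k$ independent cycles, a contradiction; hence $N_G(v)=K$, $I\cup\{v\}$ is independent, and $G\cong S_{n,2k-1}$.

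The base case $n=24k$ carries essentially all the work, and I would prove it directly. Take a family $\mathcal{C}=\{C_1,\dots,C_t\}$ of vertex-disjoint cycles with $t$ maximum (so $t\leq k-1$) and, among these, with $\sum_i|V(C_i)|$ minimum; put $W=\bigcup_iV(C_i)$ and $U=V(G)\setminus W$. By maximality of $t$, the graph $G[U]$ is a forest, so $e(G[U])\leq|U|-1$. By minimality of $\sum_i|V(C_i)|$, each $C_i$ is chordless, and every $u\in U$ has at most one neighbour on each $C_i$ of length $\geq 5$, at most two (an antipodal pair) on each $4$-cycle, and at most three on a triangle, since in the first two cases a shorter cycle through $u$ would replace $C_i$ and contradict minimality. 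A further use of maximality controls triangles: if a triangle of $\mathcal{C}$ is fully joined to three vertices of $U$ that are not independent, or if two vertices of $U$ fully joined to a triangle of $\mathcal{C}$ share a neighbour in $U$, then one extracts an additional vertex-disjoint cycle, contradicting maximality of $t$. Balancing these constraints against the trade-off ``$G[U]$ sparse $\Leftrightarrow$ rich attachment of $U$ to $W$'' (in $S_{n,2k-1}$, $G[U]$ is a single star while most vertices of $U$ send only $2k-2$ edges to $W$) should bound $e(G)=e(G[U])+e(U,W)+e(G[W])$ by $(2k-1)(n-k)$ when $n=24k$, with equality forcing $t=k-1$, every $C_i$ a triangle, $W$ lying inside a $(2k-1)$-clique of $G$, $U$ independent, and every vertex of $U$ joined to all $2k-1$ of those clique vertices, i.e.\ $G\cong S_{n,2k-1}$; this argument in fact goes through verbatim for all $n\geq 24k$, so the induction above is really only needed to streamline the equality analysis.

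The main obstacle is precisely this base case: the naive counts $e(G[U])\leq|U|-1$, $e(U,W)\leq 3|\mathcal{C}|\cdot|U|$, and $e(G[W])\leq\binom{|W|}{2}$ are each loose for $S_{n,2k-1}$, so they overcount badly unless one simultaneously exploits (i) packing minimality, (ii) non-enlargeability of the packing, and (iii) the sparsity/attachment trade-off; only their combination brings the coefficient of $n$ down to $2k-1$ and pins the $k$-dependent slack so that it just vanishes at $n=24k$. The hardest sub-case is that of triangles in $\mathcal{C}$, which a priori could each contribute three edges to every vertex of $U$ but in the extremal configuration contribute only two on average, and nailing the exact extremal structure there is where the detailed case analysis concentrates; the inductive step and its equality clause above are comparatively routine.
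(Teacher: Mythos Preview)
The paper does not prove Theorem~1.1 at all; it is quoted from Erd\H{o}s and P\'{o}sa~\cite{EP} and then used as a black box throughout Lemmas~2.2--2.7 and the final proof of Theorem~1.2. So there is no in-paper argument to compare your proposal against.

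On your proposal itself: the inductive step via Corr\'{a}di--Hajnal is correct, including the equality analysis forcing $N_G(v)=K$. The content, as you acknowledge, is entirely in the base case $n=24k$, and there your write-up is a sketch rather than an argument. You set up the right framework---a maximum packing of minimum total length, the forest $G[U]$, and the per-cycle attachment bounds---and you correctly note that the naive estimates $e(G[U])\leq|U|-1$, $e(U,W)\leq 3t\,|U|$, $e(G[W])\leq\binom{|W|}{2}$ overshoot the target by a constant factor in the leading term. But the phrase ``balancing these constraints \ldots\ should bound $e(G)$ by $(2k-1)(n-k)$'' is exactly where a proof is needed and none is supplied: one has to exhibit an explicit accounting that trades the excess from fully-attached $U$-vertices against the deficit in $e(G[U])$ and in the inter-cycle edges of $G[W]$, and this bookkeeping is the whole difficulty of the theorem. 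One of your stated refinements is also not right as written: if two $U$-vertices fully joined to a triangle $T=abc$ share a common $U$-neighbour $u_3$, you get a $4$-cycle through $u_3$ and one vertex of $T$, but the remaining two vertices of $T$ form only an edge, so you have not produced an \emph{additional} disjoint cycle. None of this dooms the approach---it is essentially the route one takes---but as it stands the base case is a hope with a gap, not a plan that visibly closes.
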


Let $A(G)$ be the \emph{adjacency matrix} of a graph $G$ and $\rho(G)$ be the \emph{spectral radius} of $G$.
By Perron-Frobenius theorem, every connected graph $G$
exists a positive unit eigenvector corresponding to $\rho(G)$,
which is called the \emph{Perron vector} of $G$.
In 1986, Wilf \cite{WILF} showed that $\rho(G)\leq n(1-\frac1k)$ for every $n$-vertex $K_{k+1}$-free graph $G$.
Wilf's result was sharpened by Nikiforov \cite{V1},
who proved a spectral version of Tur\'{a}n's theorem, that is,
$\rho(G)\leq \rho(T_{n,k})$ for every $n$-vertex $K_{k+1}$-free graph $G$,
with equality if and only if $G\cong T_{n,k}$.
Babai and Guiduli \cite{BG} established
asymptotically a K\H{o}v\'{a}ri-S\'{o}s-Tur\'{a}n bound
$\rho(G)\leq \big((s-1)^{\frac1t}+o(1)\big)n^{1-\frac1t}$ for every $n$-vertex $K_{s,t}$-free graph $G$ (where $s\geq t\geq2$),
then Nikiforov \cite{V6} obtained an upper bound with main term $(s-t+1)^{\frac1t}n^{1-\frac1t}$.
In view of the inequality $\rho(G)\geq \frac{2e(G)}n$,
Nikiforov's result is a slight improvement of a result of F\"{u}redi \cite{FU}.
In 2010, Nikiforov \cite{V5} formally posed a spectral version of Tur\'{a}n-type problem.

\begin{prob}\label{prob1}
For a given graph $H$,
what is the maximum spectral radius of an $n$-vertex $H$-free graph?
\end{prob}

In the past decades, much attention has been paid to Problem \ref{prob1} for some specific graphs $H$, see for example, $C_4$ \cite{V1,ZW}, $C_6$ \cite{ZL}, odd cycles \cite{V2},
long cycles \cite{Hou}, edge-disjoint cycles \cite{LZZ}, paths \cite{V5}, 
linear forests \cite{FYZ,CLZ}, star forests \cite{CLZ2}, friendship graphs \cite{CFTZ},
odd wheels \cite{CS}, intersecting odd cycles \cite{LP}, intersecting cliques \cite{DKL},
and minors \cite{HONG,V7,LIN,TT,TT2,ZL1}.

To obtain these results, spectral techniques have been greatly developed.
For example, counting walks \cite{V3}, deleting vertices \cite{V5},
triangle removal lemma \cite{CFTZ}, majorization \cite{LNW}, and so on.
Using the Perron eigenvector (leading eigenvector) to deduce structural properties of spectral extremal graphs (we call this ``leading-eigenvector method"),
Tait and Tobin \cite{TT} successfully solved a spectral extremal conjecture on planar graphs, which was proposed by Boots and Royle \cite{BT},
and independently by Cao and Vince \cite{CAO}.
Very recently, leading-eigenvector method was excellently developed by
Cioab\u{a}, Desai and Tait \cite{CDT,CDT2} to solve the
spectral even cycle conjecture and the spectral Erd\H{o}s-S\'{o}s conjecture,
which were proposed by Nikiforov \cite{V5} in 2010.

Since Problem \ref{prob1} was proposed, the following problem almost
simultaneously abstracted wide interest of scholars.

\begin{prob}\label{prob2}
When does an edge-extremal problem have its spectral analogue, or particularly,
when do they have the same extremal graphs?
\end{prob}

For many Tur\'{a}n-type results, researchers managed to prove their spectral analogues, for example, spectral Tur\'{a}n's theorem \cite{V1}, spectral Erd\H{o}s-Stone-Bollob\'{a}s theorem \cite{ESB}, and
spectral chromatic critical edge theorem (see Theorem 2,\cite{ESB1}).
Moreover, Cioab\u{a}, Desai and Tait \cite{CS} proposed the following conjecture
which extends Nikiforov's spectral chromatic critical edge theorem.

\begin{conj}\label{conj1}
Let $H$ be any graph such that its edge-extremal graphs are Tur\'{a}n graphs plus $O(1)$ edges. Then its spectral extremal family is contained in its edge-extremal family for sufficiently large $n$.
\end{conj}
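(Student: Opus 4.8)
\smallskip
\noindent\textbf{A proof strategy for Conjecture~\ref{conj1}.}
The plan is to combine the Erd\H{o}s--Simonovits stability method with the leading-eigenvector technique, carried out in three stages. Put $r=\chi(H)-1$, so that by the hypothesis together with the Erd\H{o}s--Stone--Simonovits theorem the Tur\'{a}n graph $T_{n,r}$ is $H$-free and $ex(n,H)=e(T_{n,r})+O(1)$. Let $G$ be an $n$-vertex $H$-free graph with $\rho(G)$ maximum; since the $ex(n,H)$-extremal graphs are themselves $H$-free, $\rho(G)\geq\rho(T_{n,r})\geq(1-\tfrac1r)n-1$. \emph{Stage 1 (coarse structure).} A spectral stability theorem --- in the spirit of Nikiforov's results for large forbidden subgraphs and of the stability steps in \cite{CDT,CDT2} --- yields a partition $V(G)=V_1\cup\cdots\cup V_r$ with $\sum_i e_G(V_i)=o(n^2)$ and $\sum_{i<j}\big(|V_i||V_j|-e_G(V_i,V_j)\big)=o(n^2)$, i.e.\ $G$ differs from $T_{n,r}$ by only $o(n^2)$ edges.

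\emph{Stage 2 (exact structure via the Perron vector).} Let $\mathbf{x}$ be the Perron vector with $\|\mathbf{x}\|_2=1$ and let $z$ satisfy $x_z=\max_v x_v$; from $\rho(G)=\Theta(n)$ one gets $x_z=\Theta(n^{-1/2})$. First perform a cleaning step: any vertex $v$ with $d_G(v)<(1-\tfrac1r-\varepsilon)n$, or with $x_v$ far below its expected value, is deleted and replaced by a non-adjacent clone of a maximum-entry vertex in the appropriate part; the $o(n^2)$-closeness to $T_{n,r}$ guarantees the modified graph is still $H$-free for large $n$ while $\rho$ strictly increases, contradicting extremality. After iterating, one may assume $\delta(G)\geq(1-\tfrac1r-\varepsilon)n$ and that all entries $x_v$ lie in a bounded window around $x_z$; in particular every vertex has all but $o(n)$ of its cross-edges, and any vertex missing more than $O(1)$ of them would have been removed. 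Next, using the eigenvalue equation and the near-complete multipartite structure, one shows each $V_i$ induces at most $O(1)$ edges, since an edge inside $V_i$ together with suitable vertices of the remaining parts would embed $H$ (a proper $(r+1)$-colouring of $H$ --- which exists as $\chi(H)=r+1$ --- is routed into $V_1,\dots,V_r$ with one colour class split across that edge). A balancing argument (imbalance in the part sizes strictly decreases $\rho$) then forces $G$ to be a balanced complete $r$-partite graph plus $O(1)$ edges.

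\emph{Stage 3 (comparing the bounded family).} Among $H$-free graphs of the form ``$T_{n,r}$ plus $O(1)$ edges'', adding an edge $uv$ changes $\rho$ by $\approx 2x_ux_v=\Theta(n^{-1})$ and deleting a cross-edge by $-\Theta(n^{-1})$, where $\mathbf{x}$ is essentially the flat Perron vector of $T_{n,r}$; hence the spectral-extremal $G$ must carry the maximum number of extra edges consistent with $H$-freeness and retain every cross-edge, which is precisely the description of the $ex(n,H)$-extremal graphs. Should several configurations tie at this first-order level, one resolves the tie via the second-order perturbation term or an explicit eigenvalue comparison on the $O(1)$-dimensional quotient, the key point being that any configuration with strictly fewer edges is already beaten by a $\Theta(n^{-1})$ margin.

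The main obstacle is the transition from Stage 2 to Stage 3 for an arbitrary $H$: the ``$O(1)$ extra edges'' need not form a canonical pattern, and one must control $H$-freeness across the whole perturbed family --- in particular ruling out a cleverly placed bounded edit that keeps $G$ $H$-free yet ties or beats every $ex(n,H)$-extremal graph spectrally. Doing this robustly appears to require structural information about $H$ beyond $\chi(H)=r+1$ and beyond the bare hypothesis on its edge-extremal graphs, which is exactly why the statement is still a conjecture; a secondary difficulty is justifying the clone-insertion of Stage 2 rigorously, i.e.\ proving that no embedding of $H$ can be forced through the inserted vertex. The spectral Erd\H{o}s--P\'{o}sa theorem established in this paper --- in which the extremal graph $S_{n,2k-1}$ is sparse and far from every Tur\'{a}n graph, yet still governs the spectral problem --- indicates that the mechanism underlying Conjecture~\ref{conj1} persists well outside the Tur\'{a}n-type setting, and supports an affirmative answer to Problem~\ref{prob2} in substantial generality.
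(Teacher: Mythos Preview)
The paper does not prove Conjecture~\ref{conj1} at all. It is stated as a conjecture of Cioab\u{a}, Desai and Tait, and the very next sentence records that it ``has just been solved by Wang, Kang and Xue (see \cite{WKX}).'' There is therefore no proof in the paper against which to check your proposal; the paper's own contribution is Theorem~\ref{thm1.2}, the spectral Erd\H{o}s--P\'{o}sa theorem, which concerns a completely different forbidden configuration (the extremal graph $S_{n,2k-1}$ is not of the Tur\'{a}n-plus-$O(1)$ type, so Conjecture~\ref{conj1} does not even apply to it).

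Your write-up is a plausible strategy sketch rather than a proof, and you yourself flag the genuine gap: Stage~3 requires showing that, among all $H$-free graphs of the form $T_{n,r}$ plus a bounded edit, the spectral maximizer actually has the maximum number of edges, and your first-order Rayleigh-quotient heuristic does not establish this without further structural input about $H$. Note also that your closing remark that ``the statement is still a conjecture'' is inconsistent with the paper, which explicitly reports that Wang, Kang and Xue have resolved it; if you want to pursue this direction you should consult \cite{WKX} for the actual argument and compare your three-stage outline to what they do.
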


Conjecture \ref{conj1} has just been solved by Wang, Kang and Xue (see \cite{WKX}).
Inspired by Problems \ref{prob1}-\ref{prob2} and leading-eigenvector method, we investigate a spectral extremal problem on graphs without $k$ independent cycles and establish the following theorem.
Combining Theorem \ref{thm1.1}, we obtain a new example illustration of Problem \ref{prob2}.

\begin{theorem}\label{thm1.2}
Let $k\geq1$ and $n\geq \frac{16(2k-1)}{\lambda^{2}}$ with $\lambda=\frac1{120k^2}$. Assume that $G$ is a graph of order $n$ which contains no $k$ independent cycles.
Then
\begin{eqnarray*}\label{eq111}
\rho(G)\leq \rho(S_{n,2k-1}),
\end{eqnarray*}
the equality holds if and only if $G\cong S_{n,2k-1}.$
\end{theorem}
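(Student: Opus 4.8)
The plan is to run the leading-eigenvector method on a spectral-extremal graph. Fix $k\ge2$; the case $k=1$ is classical, since a graph with no cycle is a forest and among $n$-vertex forests the star $K_{1,n-1}=S_{n,1}$ is the unique maximizer of the spectral radius. Let $G^{*}$ maximize $\rho(\cdot)$ over $n$-vertex graphs having no $k$ independent cycles. Since the independent part of $S_{n,2k-1}$ is edgeless, every cycle of $S_{n,2k-1}$ meets the clique in at least two vertices, so $S_{n,2k-1}$ has at most $k-1$ independent cycles and is an admissible competitor; hence $\rho:=\rho(G^{*})\ge\rho_{0}:=\rho(S_{n,2k-1})$. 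Adding a bridge between two components creates no cycle but strictly increases $\rho$, so $G^{*}$ is connected, and likewise $G^{*}$ is edge-maximal (adding any nonedge creates $k$ independent cycles). Let $x$ be the Perron vector of $G^{*}$ with $x_{u}=\max_{v}x_{v}=1$. From the quotient matrix of $S_{n,2k-1}$ along the partition (clique, stable set) one gets $\rho_{0}=(k-1)+\sqrt{(k-1)^{2}+(2k-1)(n-2k+1)}$, hence $\sqrt{(2k-1)(n-2k+1)}<\rho_{0}<\sqrt{(2k-1)n}+k$; with $\lambda=1/(120k^{2})$ and $n\ge16(2k-1)/\lambda^{2}$ one checks that $\rho_{0}\lambda>2(2k-1)$ and that all error terms below are negligible. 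The elementary inequality $d(v)\ge\sum_{w\sim v}x_{w}=\rho x_{v}$ gives $d(u)\ge\rho_{0}$ and, for any vertex of non-negligible Perron weight, a correspondingly large degree; also $e(G^{*})\ge\rho^{2}/2\ge\rho_{0}^{2}/2$, far larger than $n-1$.

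The reduction I will use is: it suffices to exhibit a set $R$ with $|R|\le 2k-1$ such that $V(G^{*})\setminus R$ is independent. Granting this, every edge of $G^{*}$ lies inside $R$ or between $R$ and $V\setminus R$, so $G^{*}$ is a spanning subgraph of the complete split graph $S_{n,|R|}$, which is itself a spanning subgraph of $S_{n,2k-1}$; hence $\rho(G^{*})\le\rho(S_{n,|R|})\le\rho(S_{n,2k-1})=\rho_{0}$. Together with $\rho(G^{*})\ge\rho_{0}$ and the strict monotonicity of $\rho(S_{n,m})$ in $m$, this forces equality throughout, $|R|=2k-1$, and (both graphs being connected) $G^{*}\cong S_{n,2k-1}$. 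Finally, any $G$ with no $k$ independent cycles satisfies $\rho(G)\le\rho(G^{*})=\rho(S_{n,2k-1})$, with equality only if $G$ is itself extremal, i.e. $G\cong S_{n,2k-1}$; this also gives the equality characterization.

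The core of the proof is to produce such an $R$, and the candidate is the set of heavy vertices $B=\{v:x_{v}>\lambda\}$; here $d(v)\ge\rho x_{v}>\rho_{0}\lambda$ for every $v\in B$, so heavy vertices have degree growing with $n$, while light vertices carry $x_{w}\le\lambda$. What has to be shown is, roughly: $|B|\le 2k-1$; each heavy vertex is adjacent to all but $O_{k}(1)$ vertices; and $V\setminus B$ is independent. These are intertwined, so I would establish them by bootstrapping: the basic bound first gives heavy vertices degree $\Omega(\lambda\sqrt{n})$; feeding this back into the identity $\rho^{2}x_{u}=\sum_{w}(A^{2})_{uw}x_{w}$ together with the forbidden-configuration hypothesis sharpens the picture — if there were $2k$ heavy vertices, or if a heavy vertex missed too many vertices, one uses the connectedness and the surplus of edges ($e(G^{*})\gtrsim(2k-1)n/2$) to greedily carve out $k$ pairwise vertex-disjoint cycles, each routed through or linking distinct heavy vertices while spending only boundedly many vertices, a contradiction. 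This yields a set $R$ of at most $2k-1$ vertices dominating all but $O_{k}(1)$ vertices; the edge bound $e(G^{*})\le(2k-1)(n-k)$ of Theorem~\ref{thm1.1} then caps at $O_{k}(1)$ the number of edges not incident to $R$; and a last round of local switching plus disjoint-cycle extraction (a triangle through an offending light edge and a common heavy neighbour, together with $k-1$ disjoint $4$-cycles through the remaining $2k-2$ heavy vertices and fresh light vertices) removes the final defects, giving $|R|=2k-1$ and $V\setminus R$ independent.

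I expect this structural step to be the main obstacle. The difficulty is that the Perron weight only guarantees a heavy vertex degree of order $\sqrt{n}$, and high degree alone (as a large tree with many high-degree vertices shows) forces no cycle at all; so the extraction of $k$ disjoint cycles must also exploit the abundance of edges and the connectedness, and must carefully account for how many vertices each extracted cycle consumes, so that $2k$ heavy vertices — respectively one extra edge outside $B$ — already produce $k$ disjoint cycles; moreover one must break the apparent circularity between "$|R|=2k-1$'' and "$V\setminus R$ independent''. The quantitative room that makes this work is precisely what the hypotheses $\lambda=1/(120k^{2})$ and $n\ge16(2k-1)/\lambda^{2}$ are arranged to supply. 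Once the structure is in hand the theorem follows from the reduction above, the remaining ingredients (the formula and estimates for $\rho_{0}$, the monotonicity $\rho(S_{n,m})<\rho(S_{n,m+1})$, the inequality $d(v)\ge\rho x_{v}$, and the $k=1$ case) being routine.
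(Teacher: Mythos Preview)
Your high-level plan coincides with the paper's: take a spectral-extremal $G^{*}$, isolate a set of ``heavy'' vertices via the Perron vector, show it has size $2k-1$ and that its complement is independent, then finish by your reduction (which is correct). But the structural core, which you yourself flag as the main obstacle, is not actually carried out in your proposal, and the sketch misidentifies what the bootstrapping must achieve. Concretely: with the single threshold $\lambda=1/(120k^{2})$, the set $B=\{v:x_v>\lambda\}$ does \emph{not} have size $\le 2k-1$; the best one can prove at that level (and what the paper proves as Lemma~\ref{lem2.2}) is $|B|\le 2\sqrt{(2k-1)n}$. High degree of order $\lambda\sqrt n$ together with the edge surplus $e(G^*)\gtrsim (2k-1)n/2$ is nowhere near enough to greedily extract $k$ vertex-disjoint cycles through $2k$ prescribed vertices, since those degrees are tiny compared to $n$ and the pigeonhole that would produce short disjoint cycles through them is unavailable.

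The paper resolves this with a three-tier refinement $R\supseteq R'\supseteq R''$ at thresholds $\lambda$, $4\lambda$, $1/(4k)$, each step using the identity $\rho^{2}x_v=\sum_u(A^2)_{vu}x_u$ combined with the edge bound of Theorem~\ref{thm1.1} on suitable induced pieces: first $|R|=O(\sqrt n)$, then every $v\in R'$ has $d(v)>\tfrac13\lambda n$ and $|R'|\le 6k/\lambda=O(k^{3})$, and finally (via Lemmas~\ref{lem2.6}--\ref{lem2.7}, where a convexity trick rules out intermediate Perron weights) every $v\in R''$ has $d(v)\ge(1-\tfrac{5}{12k})n$. Only at this last stage do $2k$ heavy vertices share $\Theta(n)$ common neighbours, yielding a $K_{2k,2k}$ and hence $k$ disjoint $4$-cycles; this gives $|R''|\le 2k-1$. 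The matching lower bound $|R''|\ge 2k-1$ is not obtained structurally but by another $\rho^{2}x_{u^*}$ computation. Your proposal conflates these layers and does not supply the mechanism (raising the threshold, not merely iterating the same estimate) that converts $\Omega(\sqrt n)$ degrees into near-spanning degrees; without it the cycle-extraction step fails.
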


The rest of the paper is organized as follows. In Section \ref{se2}, we present the proof of Theorem \ref{thm1.2}.
In Section \ref{se3}, a related problem is mentioned for further research.

\section{Proof of Theorem \ref{thm1.2}}\label{se2}

For $k=1,$ every graph is a forest if it does not contain $k$ independent cycles.
We know that the star $S_{n,1}$ attains the maximum spectral radius over all forests of
order $n$. Hence, the theorem holds trivially for $k=1$.
Next we always assume that $k\geq2.$

Let $G^{*}$ be an extremal graph
with maximum spectral radius over all graphs of order $n$ which contain no $k$ independent cycles, and $X$ be the Perron vector of $G^{\ast}.$
In the following, we denote by $\rho$ the spectral radius of $G^{\ast}.$
Moreover, let $u^{\ast}$ be a vertex of $G^{\ast}$ with maximum Perron component, that is, $x_{u^{\ast}}=\max_{u\in V(G)}x_{u}.$
For any vertex $v\in V(G^{*}),$ we denote by $N(v)$ and $N^{2}(v)$ the neighborhood of $v$ and the set of vertices at distance two from $v$, respectively.

\begin{lemma}\label{lem2.1}
If $k\geq2$ and $n\geq2k+3$, then $\rho\geq\sqrt{(2k-1)n}.$
\end{lemma}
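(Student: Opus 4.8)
The plan is to exhibit a single graph on $n$ vertices that contains no $k$ independent cycles and whose spectral radius is at least $\sqrt{(2k-1)n}$; then, since $G^{*}$ is spectral-extremal over this class, we get $\rho = \rho(G^{*}) \geq \sqrt{(2k-1)n}$. The natural candidate is precisely the conjectured extremal graph $S_{n,2k-1}$, the join of a clique $K_{2k-1}$ with an independent set of size $n-2k+1$. First I would verify that $S_{n,2k-1}$ indeed has no $k$ independent cycles: any cycle must use at least one vertex of the clique part (the independent set carries no edges and no two independent-set vertices lie on a cycle without a clique vertex between them, so in fact every cycle meets the clique), hence at most $2k-1 < 2k$ pairwise vertex-disjoint cycles can exist, and in particular not $k$ of them when $\ldots$ — wait, $2k-1 \geq k$ for $k\geq 1$, so I must argue more carefully: a cycle of length $\geq 3$ in $S_{n,2k-1}$ contains at least two vertices, and since the independent set is independent, any cycle contains at least two clique vertices (a cycle alternating between independent-set vertices and clique vertices uses at least as many clique vertices as independent ones, and a cycle inside the clique uses $\geq 3$). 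Thus $k$ independent cycles would need $\geq 2k$ clique vertices, impossible. So $S_{n,2k-1}$ lies in the class.

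Next I would lower-bound $\rho(S_{n,2k-1})$. Use the Rayleigh quotient with a suitably chosen test vector, or an equitable partition argument. Partition $V(S_{n,2k-1})$ into the clique $C$ (size $2k-1$) and the independent set $I$ (size $n-2k+1$); this partition is equitable, so $\rho(S_{n,2k-1})$ equals the spectral radius of the $2\times 2$ quotient matrix
\[
B = \begin{pmatrix} 2k-2 & n-2k+1 \\ 2k-1 & 0 \end{pmatrix}.
\]
Its largest eigenvalue is $\frac{1}{2}\left( 2k-2 + \sqrt{(2k-2)^2 + 4(2k-1)(n-2k+1)} \right)$, which is at least $\sqrt{(2k-1)(n-2k+1)}$, and one checks this exceeds $\sqrt{(2k-1)n}$ fails by a lower-order term — so instead I keep the $2k-2$ contribution: the eigenvalue is $\geq \frac{1}{2}\sqrt{(2k-2)^2 + 4(2k-1)(n-2k+1)}$, and a short computation shows $(2k-2)^2 + 4(2k-1)(n-2k+1) \geq 4(2k-1)n$ exactly when $(2k-2)^2 \geq 4(2k-1)(2k-1)$, which is false; hence the clean bound $\sqrt{(2k-1)n}$ must come from using $n \geq 2k+3$ to absorb the deficit. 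Concretely, $4(2k-1)(n-2k+1) + (2k-2)^2 - 4(2k-1)n = (2k-2)^2 - 4(2k-1)(2k-1) = -(2k)(6k-4) < 0$; so I would instead bound via the alternative test vector putting weight $\alpha$ on $C$ and weight $1$ on $I$, optimize, or simply cite that for $n \geq 2k+3$ the quotient eigenvalue already clears $\sqrt{(2k-1)n}$ by direct numerical comparison.

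The only genuine obstacle is this last numerical step: showing the largest root of $B$ is $\geq \sqrt{(2k-1)n}$ when $n \geq 2k+3$. I expect the author handles it by noting that the largest eigenvalue $\mu$ of $B$ satisfies $\mu^2 = (2k-2)\mu + (2k-1)(n-2k+1)$, so $\mu^2 \geq (2k-1)(n-2k+1) + (2k-2)\mu \geq (2k-1)(n-2k+1) + (2k-2)\sqrt{(2k-1)(n-2k+1)}$, and then checking $(2k-1)(n-2k+1) + (2k-2)\sqrt{(2k-1)(n-2k+1)} \geq (2k-1)n$, i.e. $(2k-2)\sqrt{(2k-1)(n-2k+1)} \geq (2k-1)(2k-1)$, which holds once $n-2k+1 \geq \frac{(2k-1)^3}{(2k-2)^2}$ — true for $n \geq 2k+3$ when $k$ is not too small, and checkable by hand for small $k$. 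This is routine; everything else (membership in the class, equitability of the partition) is immediate. I would present it as: take $G' = S_{n,2k-1}$, observe $G'$ has no $k$ independent cycles, compute the quotient matrix, bound its Perron root below by $\sqrt{(2k-1)n}$ using $n \geq 2k+3$, and conclude by extremality of $G^{*}$.
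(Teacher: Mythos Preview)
Your overall strategy is exactly the paper's: show $S_{n,2k-1}$ has no $k$ independent cycles (the paper simply asserts this; your ``every cycle uses at least two clique vertices'' argument is correct), compute $\rho(S_{n,2k-1})$ via the equitable $2$-class partition, and check it exceeds $\sqrt{(2k-1)n}$ for $n\geq 2k+3$. The paper writes the closed form
\[
\rho(S_{n,2k-1}) = (k-1) + \sqrt{(k-1)^2 + (2k-1)(n-2k+1)}
\]
and leaves the last inequality as ``we can check''.

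Where your write-up has a genuine gap is in that final check. Your bootstrap $\mu \geq \sqrt{(2k-1)(n-2k+1)}$ followed by the requirement $(2k-2)\sqrt{(2k-1)(n-2k+1)} \geq (2k-1)^2$ is equivalent to $n-2k+1 \geq \dfrac{(2k-1)^3}{(2k-2)^2}$. But $\dfrac{(2k-1)^3}{(2k-2)^2} \sim 2k$, so this needs roughly $n \geq 4k$; it fails at $n=2k+3$ for \emph{every} $k\geq 2$, not just for small $k$, so the ``check by hand for small $k$'' escape hatch does not close the gap.

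The easy repair is to avoid the lossy bootstrap and work with the closed form directly. The inequality $(k-1)+\sqrt{(k-1)^2+(2k-1)(n-2k+1)} \geq \sqrt{(2k-1)n}$ is (after moving $k-1$ to the right and squaring) equivalent to $2(k-1)\sqrt{(2k-1)n} \geq (2k-1)^2$, i.e.\ $n \geq \dfrac{(2k-1)^3}{4(k-1)^2}$. This \emph{does} follow from $n\geq 2k+3$, since
\[
4(k-1)^2(2k+3) - (2k-1)^3 = 8k^2 - 22k + 13 > 0 \qquad (k\geq 2).
\]
With this correction your proof is complete and matches the paper's.
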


\begin{proof}
It is obvious that the complete split graph
$S_{n,2k-1}$ contains no $k$ independent cycles.
Hence, $\rho=\rho(G^{\ast})\geq\rho(S_{n,2k-1}).$
Furthermore,
\begin{eqnarray*}
\rho(S_{n,2k-1})=(k-1)+\sqrt{(k-1)^{2}+(2k-1)(n-2k+1)}.
\end{eqnarray*}
We can check that $\rho(S_{n,2k-1})\geq\sqrt{(2k-1)n}$ for $k\geq2$ and $n\geq2k+3$.
It follows that $\rho\geq\sqrt{(2k-1)n}.$
\end{proof}

In this section, our key goal is to determine the exact size of a vertex subset,
which consists of vertices with large Perron components in $G^{\ast}.$
To do this, we need to introduce two larger vertex subsets $R$ and $R'$.
Let $\lambda=\frac1{120k^2}$
and $R=\{v\in V(G^{\ast}): x_{v}>\lambda x_{u^{\ast}}\}$.
We will present an evaluation on the size of $R$.
For convenience, we define $|R|$ and $e(R)$ as the numbers of vertices
and edges within $R$, respectively. For convenience, we next define $f(n,k)=(2k-1)(n-k)$ in Theorem \ref{thm1.1}.

Before presenting our results, we first introduce the following three formulas which will be used in our arguments.
For any two vertex subsets $A$ and $B$ of $V(G^*)$, we denote by $e(A,B)$
the number of edges from $A$ to $B$. Then we have
\begin{eqnarray}\label{eq-2}
e(A,B)=e(A, B\setminus A)+e(A, A\cap B)
=e(A, B\setminus A)+2e(A\cap B)+e(A\setminus B, A\cap B).
\end{eqnarray}
By (\ref{eq-2}), we can obtain that
\begin{eqnarray}\label{eq-1}
e(A,B)\leq e(A\cup B)+e(A\cap B).
\end{eqnarray}
Moreover, it follows from (\ref{eq-2}) that
\begin{eqnarray}\label{eq0}
e(A,B)\leq |A|\cdot |B|.
\end{eqnarray}

\begin{lemma}\label{lem2.2}
If $k\geq2$ and $n\geq\frac{16(2k-1)}{\lambda^2}$,
then $|R|\leq2\sqrt{(2k-1)n}$.
\end{lemma}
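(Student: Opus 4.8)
The goal is to bound $|R| = |\{v : x_v > \lambda x_{u^*}\}|$ from above by $2\sqrt{(2k-1)n}$. The natural starting point is the eigenvalue equation at $u^*$, which gives $\rho x_{u^*} = \sum_{v \in N(u^*)} x_v \le d(u^*) x_{u^*}$, hence $d(u^*) \ge \rho \ge \sqrt{(2k-1)n}$ by Lemma~\ref{lem2.1}. But I want a bound on $|R|$, not just the degree of one vertex, so the real tool must be a double-counting of edges incident to $R$ weighted by the Perron vector, or a quadratic-form estimate. The cleanest approach: since each $v \in R$ satisfies $x_v > \lambda x_{u^*}$, summing the eigenvalue equation over $v \in R$ yields
\[
\rho \sum_{v\in R} x_v = \sum_{v \in R}\sum_{w \sim v} x_w \le \sum_{v \in R} d(v)\, x_{u^*}.
\]
The left side is at least $\rho \lambda |R| x_{u^*}$, so $\rho \lambda |R| \le \sum_{v\in R} d(v) = 2e(R) + e(R, V\setminus R)$. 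Thus I need an upper bound on the number of edges touching $R$.

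**Bounding edges at $R$ via the no-$k$-independent-cycles condition.** This is where the extremal hypothesis enters, and I expect it to be the main obstacle. The idea is that if $R$ were large and dense, then $G^*[R]$ — or $G^*$ restricted to a neighborhood of $R$ — would contain $k$ vertex-disjoint cycles. A convenient quantitative handle is: a graph on $m$ vertices with more than $f(m,k) = (2k-1)(m-k)$ edges contains $k$ independent cycles (this is essentially Theorem~\ref{thm1.1}, valid once $m \ge 24k$). So I would argue that $G^*$ has at most roughly $(2k-1)n$ edges in total (apply Theorem~\ref{thm1.1} to $G^*$ itself, which is legitimate since $n \ge \frac{16(2k-1)}{\lambda^2} = 16(2k-1)\cdot 120^2 k^4 \gg 24k$), giving $\sum_{v} d(v) = 2e(G^*) \le 2(2k-1)(n-k) < 2(2k-1)n$. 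But this only yields $\rho\lambda|R| \le \sum_{v\in R} d(v) \le 2e(G^*) \le 2(2k-1)n$, hence $|R| \le \frac{2(2k-1)n}{\lambda \rho} \le \frac{2(2k-1)n}{\lambda\sqrt{(2k-1)n}} = \frac{2\sqrt{(2k-1)n}}{\lambda}$, which is off by a factor $1/\lambda$ from the claimed bound. So the crude global edge count is not enough; I must instead get a bound of the form $\sum_{v\in R}d(v) \lesssim \lambda \cdot (2k-1)n$ or exploit that most edges at $R$ land outside $R$ and contribute to the quadratic form more efficiently.

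**Sharpening.** The right refinement is to use the quadratic form $\rho = X^{\mathsf T} A X = 2\sum_{uv\in E} x_u x_v$ together with $\sum_v x_v^2 = 1$. Split $X^{\mathsf T}AX$ by whether an endpoint lies in $R$; edges with both endpoints outside $R$ contribute at most $2\lambda^2 x_{u^*}^2 e(G^*)$, which is tiny. Meanwhile edges incident to $R$ contribute at least (something)$\cdot |R| \lambda x_{u^*}\cdot x_{\text{(its neighbor)}}$. Combining $\rho \ge \sqrt{(2k-1)n}$ with $1 = \sum x_v^2 \ge |R|\lambda^2 x_{u^*}^2$ (so $x_{u^*}^2 \le \frac{1}{\lambda^2|R|}$ if we only knew $|R|$ lower-bounded — better to use $x_{u^*} \le 1$) gives $x_{u^*}^2 \le 1$. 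A more promising route: from $\rho x_{u^*} = \sum_{w\sim u^*} x_w$ and $\rho \ge \sqrt{(2k-1)n}$, together with the fact that $u^*$ has at most... Actually the slickest is the standard trick $\rho^2 x_{u^*} = \sum_{w} (A^2)_{u^* w} x_w \le x_{u^*}\big(d(u^*) + \sum_{w \in N(u^*)} d(w)\big)$... . I would instead directly estimate $e(R)$: vertices of $R$ have $x$-value $> \lambda x_{u^*}$, and I claim there can be at most $O(k)$ of them with degree $\ge$ (half of) $|R|$, else those high-degree vertices alone yield $k$ disjoint short cycles; the rest have small degree, so $\sum_{v\in R} d(v) = 2e(R) + e(R, \overline R)$ where $e(R,\overline R) \le |R| \cdot (2k-1+ o(1))$ roughly because — via Theorem~\ref{thm1.1} applied to $R \cup N^2$-type sets — each vertex outside a bounded ``core'' sends few edges into any independent-cycle-free region. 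This localization argument, getting the factor $\lambda$ back into the edge count rather than swallowing a $1/\lambda$, is the crux; I expect the authors handle it by a careful case split on $e(R)$ versus $f(|R|,k)$ and an application of Theorem~\ref{thm1.1} to an induced subgraph of size exactly $|R|$, deriving a contradiction unless $|R| \le 2\sqrt{(2k-1)n}$.
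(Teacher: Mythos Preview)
Your first move --- summing the eigenvalue equation over $v\in R$ --- is exactly how the paper begins, and you correctly arrive at
\[
\rho\lambda|R|\;\le\;\sum_{v\in R}\sum_{w\sim v}x_w\Big/x_{u^*}.
\]
But you then bound every $x_w$ by $x_{u^*}$, obtaining $\rho\lambda|R|\le 2e(R)+e(R,V\setminus R)$, and you note that this is too weak by a factor $1/\lambda$. That diagnosis is right; the problem is that you never locate the fix, and your subsequent attempts (Rayleigh quotient split, $\rho^2 x_{u^*}$ expansion, ad hoc degree arguments inside $R$) do not recover the missing factor.

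The single idea you are missing is to split the inner sum according to whether the neighbour $w$ lies in $R$ or not. By the very definition of $R$, every $w\notin R$ satisfies $x_w\le \lambda x_{u^*}$, so
\[
\sum_{w\sim v}x_w\;\le\;d_R(v)\,x_{u^*}+\lambda\,d_{V\setminus R}(v)\,x_{u^*}.
\]
Summed over $v\in R$ this gives $|R|\lambda\sqrt{(2k-1)n}\le 2e(R)+\lambda\,e(R,V\setminus R)$, with the $\lambda$ now sitting in front of the cross term rather than having to appear in the bound for it. Now one applies Theorem~\ref{thm1.1} not to $G^*$ globally but to the induced subgraph $G^*[R]$ (assuming $|R|>2\sqrt{(2k-1)n}\ge 24k$ for a contradiction), giving $e(R)\le(2k-1)|R|$; the cross term is controlled by the crude global bound $e(R,V\setminus R)\le e(G^*)\le(2k-1)n$. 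Dividing through by $\lambda$ yields
\[
|R|\sqrt{(2k-1)n}\;\le\;\tfrac{2}{\lambda}(2k-1)|R|+(2k-1)n,
\]
and the hypothesis $n\ge 16(2k-1)/\lambda^2$ is exactly what makes $\tfrac{2}{\lambda}(2k-1)\le\tfrac12\sqrt{(2k-1)n}$, so the first term is absorbed and $|R|\le 2\sqrt{(2k-1)n}$ follows. Your final sentence gestures toward applying Theorem~\ref{thm1.1} to $G^*[R]$, but without the $R$ versus $V\setminus R$ split on the eigenvector side that edge bound alone cannot close the argument.
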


\begin{proof}
For each $v\in R$, it follows from Lemma \ref{lem2.1} and the definition of $R$ that
\begin{eqnarray}\label{eq1}
\lambda\sqrt{(2k-1)n}x_{u^{\ast}}\leq\rho x_{v}=\sum\limits_{u\in N(v)}x_{u}
\leq\sum\limits_{u\in N(v)\cap R}x_{u^{\ast}}+\sum\limits_{u\in N(v)\setminus R}\lambda x_{u^{\ast}}.
\end{eqnarray}
Summing the inequality (\ref{eq1}) for all $v\in R$, we have
\begin{eqnarray}\label{eq2}
|R|\lambda\sqrt{(2k-1)n}x_{u^{\ast}}&\leq&\sum\limits_{v\in R}d_R(v)x_{u^{\ast}}+\lambda\sum\limits_{v\in R}d_{V(G^\ast)\setminus R}(v)x_{u^{\ast}}\nonumber\\
&=&\big(2e(R)+\lambda e(R,V(G^\ast)\setminus R)\big)x_{u^{\ast}}.
\end{eqnarray}
If $|R|\leq2\sqrt{(2k-1)n}$, then we are done.
Otherwise, $|R|>2\sqrt{(2k-1)n}$, then $|R|>24k$
as $n\geq\frac{16(2k-1)}{\lambda^2}$ and $\lambda=\frac1{120k^2}$.
Note that $G^{*}[R]$ contains no $k$ independent cycles.
Thus by Theorem \ref{thm1.1}, $e(R)\leq f(|R|, k)\leq(2k-1)|R|$.
Moreover, we have
$e(R,V(G^\ast)\setminus R)\leq e(G^\ast)\leq(2k-1)n$.
Hence, by (\ref{eq2}) we obtain that
\begin{eqnarray}\label{eq3}
|R|\sqrt{(2k-1)n}\leq \frac2{\lambda}e(R)+e(G^\ast)\leq
\frac2{\lambda}\Big(2k-1\Big)|R|+(2k-1)n.
\end{eqnarray}
Since $n\geq\frac{16(2k-1)}{\lambda^2}$,
we have $\frac2{\lambda}(2k-1)\leq\frac12\sqrt{(2k-1)n}.$
Combining (\ref{eq3}), we obtain that $\frac12|R|\sqrt{(2k-1)n}\leq (2k-1)n$,
which contradicts the assumption $|R|>2\sqrt{(2k-1)n}$.
\end{proof}

Lemma \ref{lem2.2} tells us $|R|=O\big(\!\sqrt{kn}\big)$.
We now need to find a vertex subset $R'$,
which consists of vertices with larger Perron components.
Moreover, we hope that $|R'|=O(k^3)$.
To this end, we define $R'=\{v\in V(G^{\ast}): x_{v}>4\lambda x_{u^{\ast}}\}$.
Clearly, $R'\subseteq R.$
We first give the following lemma,
and then give an evaluation on $|R'|$.

\begin{lemma}\label{lem2.3}
If $k\geq2$, $n\geq\frac{16(2k-1)}{\lambda^{2}}$ and
$v\in V(G^{\ast})$ with $d(v)\leq\frac13\lambda n$, then
\begin{eqnarray*}
e\big(N(v),N^{2}(v)\cap R\big)\leq\Big(\frac53k-1\Big)\lambda n.
\end{eqnarray*}
\end{lemma}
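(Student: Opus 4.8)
The plan is to argue by contradiction and reduce everything to Theorem~\ref{thm1.1}. Suppose, for contradiction, that $e\big(N(v),N^{2}(v)\cap R\big)>\big(\tfrac53 k-1\big)\lambda n$. Write $A=N(v)$ and $S=N^{2}(v)\cap R$; since $N^{2}(v)$ is disjoint from $N(v)$ we have $A\cap S=\emptyset$, and plainly $v\notin A\cup S$. The key point is that the induced subgraph $G^{\ast}[A\cup S\cup\{v\}]$, being a subgraph of $G^{\ast}$, still contains no $k$ independent cycles, so Theorem~\ref{thm1.1} bounds its number of edges; converting that into an upper bound on $e(A,S)$ will contradict the assumption.

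First I would record the size estimates that will be fed into that bound. By hypothesis $|A|=d(v)\le\tfrac13\lambda n$, and by Lemma~\ref{lem2.2} together with $n\ge\frac{16(2k-1)}{\lambda^{2}}$ we get $|S|\le|R|\le 2\sqrt{(2k-1)n}\le\tfrac12\lambda n$, where the last step is exactly the inequality $16(2k-1)\le\lambda^{2}n$. Before invoking Theorem~\ref{thm1.1} one must dispose of the degenerate range: if $|A|+|S|+1<24k$, then by (\ref{eq0}) we have $e(A,S)\le|A|\cdot|S|<(24k)^{2}$, which is far smaller than $\big(\tfrac53 k-1\big)\lambda n$ since $\lambda n\ge\frac{16(2k-1)}{\lambda}$ is of order $k^{3}$ while $(24k)^{2}$ is of order $k^{2}$ --- a contradiction. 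So from now on $|A|+|S|+1\ge 24k$, which is precisely what is needed to apply Theorem~\ref{thm1.1}.

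The decisive observation is that $N(v)=A$, so $v$ has degree exactly $|A|$ in $G^{\ast}[A\cup S\cup\{v\}]$; deleting $v$ therefore gives
\[
e(A,S)\ \le\ e\big(G^{\ast}[A\cup S]\big)\ =\ e\big(G^{\ast}[A\cup S\cup\{v\}]\big)-|A|.
\]
Applying Theorem~\ref{thm1.1} to the $(|A|+|S|+1)$-vertex graph $G^{\ast}[A\cup S\cup\{v\}]$ yields $e\big(G^{\ast}[A\cup S\cup\{v\}]\big)\le(2k-1)\big(|A|+|S|+1-k\big)$, hence
\[
e(A,S)\ \le\ (2k-1)\big(|A|+|S|+1-k\big)-|A|\ =\ (2k-2)|A|+(2k-1)|S|+(2k-1)(1-k).
\]
Since $(2k-1)(1-k)<0$ for $k\ge2$, substituting $|A|\le\tfrac13\lambda n$ and $|S|\le\tfrac12\lambda n$ gives $e(A,S)<\big(\tfrac{2k-2}{3}+\tfrac{2k-1}{2}\big)\lambda n=\tfrac{10k-7}{6}\lambda n<\tfrac{10k-6}{6}\lambda n=\big(\tfrac53 k-1\big)\lambda n$, the required contradiction.

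I expect the main obstacle to be the tightness of this last estimate rather than any conceptual difficulty. If one simply discarded $v$ and used $e(A,S)\le e\big(G^{\ast}[A\cup S]\big)\le(2k-1)(|A|+|S|)\le\tfrac{5}{6}(2k-1)\lambda n=\tfrac{10k-5}{6}\lambda n$, the bound would overshoot $\big(\tfrac53 k-1\big)\lambda n$. The saving comes exactly from the fact that every neighbour of $v$ lies in $A$, which lowers the effective coefficient of $|A|$ from $2k-1$ down to $2k-2$; and the inequality $|S|\le\tfrac12\lambda n$ must be used in the sharp form that consumes the full hypothesis $n\ge\frac{16(2k-1)}{\lambda^{2}}$. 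Everything else is routine bookkeeping.
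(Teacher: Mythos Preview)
Your proof is correct and follows essentially the same approach as the paper: both include $v$ in the induced subgraph so that subtracting $d(v)$ lowers the coefficient of $|N(v)|$ from $2k-1$ to $2k-2$, both invoke Theorem~\ref{thm1.1} after disposing of the small case, and both convert the Lemma~\ref{lem2.2} bound $|R|\le 2\sqrt{(2k-1)n}$ into $|R|\le\tfrac12\lambda n$ via the hypothesis $n\ge 16(2k-1)/\lambda^{2}$. The only cosmetic difference is that the paper applies Theorem~\ref{thm1.1} to the slightly larger set $N(v)\cup\{v\}\cup R$ rather than $N(v)\cup\{v\}\cup\big(N^{2}(v)\cap R\big)$, and argues directly instead of by contradiction.
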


\begin{proof}
Note that $N(v)\cup\big(N^{2}(v)\cap R\big)\subseteq N(v)\cup R$.
If $|N(v)\cup R|<24k$, then
$e\big(N(v),N^{2}(v)\cap R\big)<(12k)^2,$
since $N(v)$ and $N^{2}(v)\cap R$ are disjoint and the sum of their numbers of vertices is less than $24k.$
Hence $e\big(N(v),N^{2}(v)\cap R\big)<\big(\frac53k-1\big)\lambda n$.
Now we assume that $|N(v)\cup R|\geq24k$.
Note that $G^{\ast}[N(v)\cup\{v\}\cup R]$ is a subgraph of $G^{\ast}.$
By Theorem \ref{thm1.1}, we have
$$e\big(N(v)\cup\{v\}\cup R\big)\leq f\big(|N(v)\cup R|+1,k\big)
\leq(2k-1)|N(v)\cup R|.$$
Consequently,
\begin{eqnarray}\label{eq4}
e\big(N(v)\cup R\big) &\leq& (2k-1)|N(v)\cup R|-d(v)
\leq(2k-2)d(v)+(2k-1)|R|\nonumber\\
&\leq& \frac13\big(2k-2\big)\lambda n+(2k-1)|R|.
\end{eqnarray}
By Lemma \ref{lem2.2}, we have $|R|\leq2\sqrt{(2k-1)n}$. Hence,
$$(2k-1)|R|\leq2(2k-1)\sqrt{(2k-1)n}\leq\frac12\big(2k-1\big)\lambda n,$$
where the last inequality follows from $n\geq\frac{16(2k-1)}{\lambda^{2}}$.
Now by (\ref{eq4}), we have $e\big(N(v),N^{2}(v)\cap R\big)\leq e\big(N(v)\cup R\big)\leq\big(\frac53k-1\big)\lambda n,$ as desired.
\end{proof}

\begin{lemma}\label{lem2.4}
If $k\geq2$ and $n\geq\frac{16(2k-1)}{\lambda^{2}}$,
then $|R'|\leq\frac{6k}\lambda,$
and $d(v)>\frac13\lambda n$ for each $v\in R'$.
\end{lemma}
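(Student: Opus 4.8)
The plan is to prove the two conclusions in order: the degree lower bound first, and then the cardinality bound, which will follow from it by an easy double count.

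To prove that $d(v)>\tfrac13\lambda n$ for every $v\in R'$, I would argue by contradiction: suppose $v\in R'$ but $d(v)\le\tfrac13\lambda n$, so that Lemma \ref{lem2.3} applies to $v$. The key is to estimate $\rho^{2}x_{v}$ through the two-step walk expansion of the eigenvalue equation. Writing $\rho^{2}x_{v}=\sum_{u\in N(v)}\rho x_{u}=\sum_{u\in N(v)}\sum_{w\in N(u)}x_{w}$ and sorting the length-two walks by their endpoint $w$ yields
\[
\rho^{2}x_{v}=d(v)x_{v}+\sum_{w\in N(v)}|N(v)\cap N(w)|\,x_{w}+\sum_{w\in N^{2}(v)}|N(v)\cap N(w)|\,x_{w}.
\]
I would then bound the three summands, each by a multiple of $\lambda n\,x_{u^{\ast}}$. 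The first is at most $\tfrac13\lambda n\,x_{u^{\ast}}$. For the second, $\sum_{w\in N(v)}|N(v)\cap N(w)|=2e(N(v))$; applying Theorem \ref{thm1.1} to $G^{\ast}[N(v)\cup\{v\}]$ (the case $d(v)+1<24k$ being trivial) gives $e(N(v))\le(2k-1)(d(v)+1)\le\tfrac12(2k-1)\lambda n$, so this summand is at most $(2k-1)\lambda n\,x_{u^{\ast}}$. For the third, split $N^{2}(v)$ into $N^{2}(v)\cap R$ and $N^{2}(v)\setminus R$: on $N^{2}(v)\cap R$ the contribution is at most $e\big(N(v),N^{2}(v)\cap R\big)x_{u^{\ast}}\le(\tfrac53k-1)\lambda n\,x_{u^{\ast}}$ by Lemma \ref{lem2.3}, while on $N^{2}(v)\setminus R$ we have $x_{w}\le\lambda x_{u^{\ast}}$, so the contribution is at most $\lambda\,e\big(N(v),N^{2}(v)\setminus R\big)x_{u^{\ast}}\le\lambda\,e(G^{\ast})x_{u^{\ast}}\le(2k-1)\lambda n\,x_{u^{\ast}}$. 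Adding up, $\rho^{2}x_{v}\le\frac{17k-8}{3}\lambda n\,x_{u^{\ast}}$. On the other hand $v\in R'$ gives $x_{v}>4\lambda x_{u^{\ast}}$, and Lemma \ref{lem2.1} gives $\rho^{2}\ge(2k-1)n$, so $\rho^{2}x_{v}>(8k-4)\lambda n\,x_{u^{\ast}}$. Since $8k-4>\frac{17k-8}{3}$ whenever $7k>4$, hence for all $k\ge 2$, this contradiction establishes $d(v)>\tfrac13\lambda n$.

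For $|R'|\le\frac{6k}{\lambda}$ I would again suppose the contrary, $|R'|>\frac{6k}{\lambda}$. Since $\frac{6k}{\lambda}=720k^{3}\ge 24k$, both $G^{\ast}$ and $G^{\ast}[R']$ satisfy the hypotheses of Theorem \ref{thm1.1}, so $e(G^{\ast})\le(2k-1)(n-k)$ and $e(R')\le(2k-1)(|R'|-k)$. Applying (\ref{eq-1}) with $A=R'$ and $B=V(G^{\ast})$ gives $\sum_{v\in R'}d(v)=e\big(R',V(G^{\ast})\big)\le e(G^{\ast})+e(R')\le(2k-1)(n+|R'|-2k)$. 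Combining with the degree bound just proved, $\sum_{v\in R'}d(v)>\tfrac13\lambda n\,|R'|$, I obtain $|R'|\big(\tfrac13\lambda n-(2k-1)\big)<(2k-1)n$. Because $\tfrac13\lambda n-(2k-1)>0$ (as $\lambda n\ge\frac{16(2k-1)}{\lambda}\gg 3(2k-1)$) and $|R'|>\frac{6k}{\lambda}$, substituting and simplifying yields $n<\frac{6k(2k-1)}{\lambda}$, which contradicts $n\ge\frac{16(2k-1)}{\lambda^{2}}=\frac{1920k^{2}(2k-1)}{\lambda}$, completing the proof.

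The delicate part is the first assertion: the walk-expansion bookkeeping must be tight enough that the accumulated constant $\frac{17k-8}{3}$ stays strictly below $8k-4$, which is exactly why Lemmas \ref{lem2.1}--\ref{lem2.3} were proved first — each of the three pieces of $\rho^{2}x_{v}$ is tamed by one of them. Once $d(v)>\tfrac13\lambda n$ is in hand, the cardinality bound is a routine degree count, the only subtlety being that $n$ is large enough (by a wide margin) to absorb the lower-order $(2k-1)$ terms.
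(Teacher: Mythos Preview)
Your proof is correct and follows essentially the same approach as the paper's: the walk expansion of $\rho^{2}x_{v}$ split according to $R$, combined with Lemmas \ref{lem2.1}--\ref{lem2.3}, yields the degree bound by contradiction, and the cardinality bound then follows from a degree count against $e(G^{\ast})$ via Theorem \ref{thm1.1}. Your constants in the upper bound for $\rho^{2}x_{v}$ are slightly looser than the paper's (you obtain $\tfrac{17k-8}{3}$ where the paper gets $5k-3$, because the paper uses the sharper estimate $e(N(v))\le(2k-2)d(v)$ after first showing $d(v)>24k$), but since $\tfrac{17k-8}{3}<8k-4$ still holds, the argument goes through unchanged.
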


\begin{proof}
We first show that $d(v)>\frac13\lambda n$ for each $v\in R'$.
Suppose to the contrary that
there exists a vertex $v\in R'$ such that $d(v)\leq\frac13\lambda n$.
By Lemma \ref{lem2.1} and the definition of $R'$, we have
\begin{eqnarray}\label{eq5}
\rho^{2}x_{v}\geq(2k-1)nx_{v}>4(2k-1)\lambda nx_{u^{\ast}}.
\end{eqnarray}
Note also that
\begin{eqnarray}\label{eq6}
\rho^{2}x_{v}&=&d(v)x_{v}+\sum\limits_{u\in N(v)}d_{N(v)}(u)x_{u}+\sum\limits_{u\in N^{2}(v)}d_{N(v)}(u)x_{u}\nonumber\\
&\leq&d(v)x_{u^{\ast}}+2e\big(N(v)\big)x_{u^{\ast}}+\sum\limits_{u\in N^{2}(v)\cap R}d_{N(v)}(u)x_{u}
+\sum\limits_{u\in N^{2}(v)\setminus R}d_{N(v)}(u)x_{u}.
\end{eqnarray}
Note that $x_v>4\lambda x_{u^{\ast}}$ and $\rho\geq\sqrt{(2k-1)n}$.
Then $4\lambda\sqrt{(2k-1)n} x_{u^{\ast}}<\rho x_v=\sum_{u\in N(v)}x_u\leq d(v)x_{u^{\ast}}.$ This implies that $d(v)>4\lambda\sqrt{(2k-1)n}\geq24k$.
Since $G^{\ast}[N(v)\cup\{v\}]$ contains no $k$ independent cycles,
by Theorem \ref{thm1.1} we have
$$e\big(N(v)\cup\{v\}\big)\leq f\big(d(v)+1,k\big)
=(2k-1)\big(d(v)+1-k\big)\leq (2k-1)d(v),$$
and so $e\big(N(v)\big)\leq (2k-2)d(v)$.
It follows that
\begin{eqnarray}\label{eq7}
\Big(d(v)+2e\big(N(v)\big)\Big)x_{u^{\ast}}
\leq(4k-3)d(v)x_{u^{\ast}}\leq\Big(\frac43k-1\Big)\lambda nx_{u^{\ast}},
\end{eqnarray}
as $d(v)\leq\frac13\lambda n$.
According to Lemma \ref{lem2.3}, we obtain that
\begin{eqnarray}\label{eq8}
\sum\limits_{u\in N^{2}(v)\cap R}d_{N(v)}(u)x_{u}\leq e\big(N(v),N^{2}(v)\cap R\big)x_{u^{\ast}}\leq\Big(\frac53k-1\Big)\lambda nx_{u^{\ast}}.
\end{eqnarray}
Furthermore, by Theorem \ref{thm1.1} we have $e\big(N(v),N^{2}(v)\setminus
R\big)\leq e(G^{\ast})\leq f(n,k).$
According to the definition of $R$, it follows that
\begin{eqnarray}\label{eq9}
\sum\limits_{u\in N^{2}(v)\setminus R}d_{N(v)}(u)x_{u}\leq e\big(N(v),N^{2}(v)\setminus
R\big)\lambda x_{u^{\ast}}\leq f(n,k)\lambda x_{u^{\ast}}\leq(2k-1)\lambda nx_{u^{\ast}}.
\end{eqnarray}
Combining the inequalities (\ref{eq6})-(\ref{eq9}), we have
\begin{eqnarray}\label{eq10}
\rho^{2}x_{v}\leq \Big(\big(\frac43k-1\big)\lambda n+
\big(\frac53k-1\big)\lambda n+(2k-1)\lambda n\Big)x_{u^{\ast}}
=(5k-3)\lambda n x_{u^{\ast}}.
\end{eqnarray}
Comparing (\ref{eq10}) with (\ref{eq5}),
we obtain that $4(2k-1)<5k-3$,
a contradiction.
Therefore, $d(v)>\frac13\lambda n$ for each $v\in R'$.

Next, we prove $|R'|\leq\frac{6k}\lambda.$
If $|R'|\leq24k$, then we are done as $\lambda=\frac1{120k^2}$.
Assume that $|R'|\geq24k$.
By Theorem \ref{thm1.1},
$e(R')\leq f(|R'|,k)\leq (2k-1)|R'|$. Moreover,
\begin{eqnarray*}
(2k-1)n\geq f(n,k)\geq e(G^{\ast})
=\frac12\sum\limits_{v\in R'}d(v)+\frac12\sum\limits_{v\in V(G^{\ast})\setminus R'}d(v),
\end{eqnarray*}
where $\sum_{v\in R'}d(v)\geq \frac13\lambda n|R'|$ and
$$\sum\limits_{v\in V(G^{\ast})\setminus R'}d(v)
\geq e\big(R',V(G^{\ast})\setminus R'\big)=\sum\limits_{v\in R'}d(v)-2e(R')
\geq \frac13\lambda n|R'|-2(2k-1)|R'|.$$
Combining the above three inequalities, we obtain that
$(2k-1)n\geq \frac13\lambda n|R'|-(2k-1)|R'|.$
It follows that
$$|R'|\leq \frac{2kn-n}{\frac13\lambda n-2k+1}
\leq\frac{2kn}{\frac13\lambda n}=\frac{6k}\lambda,$$
where the second inequality follows from $\lambda=\frac{1}{120k^2}$ and $n\geq\frac{16(2k-1)}{\lambda^{2}}$.
\end{proof}

Recall that $R=\{v\in V(G^{\ast}): x_{v}>\lambda x_{u^{\ast}}\}$
and $R'=\{v\in V(G^{\ast}): x_{v}>4\lambda x_{u^{\ast}}\}$,
where $\lambda=\frac1{120k^2}$.
Now we consider a vertex subset
which consists of vertices with larger Perron components.
Let $R''=\{v\in V(G^{\ast}): x_{v}\geq\frac1{4k}x_{u^{\ast}}\}$.
Clearly, $R''\subseteq R'\subseteq R$.
We shall prove that $|R''|=O(k)$.
Before this, we need the following three lemmas.

\begin{lemma}\label{lem2.5}
Let $k\geq2$ and $n\geq\frac{16(2k-1)}{\lambda^{2}}$.
Then for each vertex $v\in R'$, we have
\begin{eqnarray*}
e\big(N(v)\setminus R',R'\setminus\{v\}\big)\leq(2k-2)d(v)+(2k-1)|R'|.
\end{eqnarray*}
\end{lemma}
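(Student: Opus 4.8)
The plan is to bound $e\big(N(v)\setminus R',\,R'\setminus\{v\}\big)$ by passing to the subgraph of $G^{\ast}$ induced on $N(v)\cup R'$ and invoking the edge‑extremal Theorem~\ref{thm1.1}, in the same spirit as the proofs of Lemmas~\ref{lem2.3} and~\ref{lem2.4}. First I would set $A:=N(v)\setminus R'$ and $B:=R'\setminus\{v\}$ and observe that $A$ and $B$ are disjoint, since $A$ avoids $R'$ while $B\subseteq R'$. Hence $A\cap B=\emptyset$, and (\ref{eq-1}) gives $e(A,B)\le e(A\cup B)$. Moreover $v\notin A$ (as $v\notin N(v)$) and $v\notin B$, so $A\cup B\subseteq (N(v)\cup R')\setminus\{v\}$, and therefore $e(A,B)\le e\big((N(v)\cup R')\setminus\{v\}\big)$.

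Next I would check that Theorem~\ref{thm1.1} applies to $G^{\ast}[N(v)\cup R']$. Since $v\in R'$, Lemma~\ref{lem2.4} yields $d(v)>\tfrac13\lambda n$, which together with $n\ge\frac{16(2k-1)}{\lambda^{2}}$ and $\lambda=\frac1{120k^{2}}$ forces $d(v)>24k$; in particular $|N(v)\cup R'|\ge d(v)>24k$. As $G^{\ast}[N(v)\cup R']$ is a subgraph of $G^{\ast}$, it contains no $k$ independent cycles, so Theorem~\ref{thm1.1} gives $e(N(v)\cup R')\le f\big(|N(v)\cup R'|,k\big)\le(2k-1)|N(v)\cup R'|$. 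Now I peel off the vertex $v$: because $v\in R'\subseteq N(v)\cup R'$ and $v$ is adjacent in $G^{\ast}$ to every vertex of $N(v)\subseteq N(v)\cup R'$, the degree of $v$ inside $G^{\ast}[N(v)\cup R']$ is exactly $d(v)$, so $e\big((N(v)\cup R')\setminus\{v\}\big)=e(N(v)\cup R')-d(v)\le(2k-1)|N(v)\cup R'|-d(v)$.

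Finally I would use the crude estimate $|N(v)\cup R'|\le|N(v)|+|R'|=d(v)+|R'|$ to conclude
$$e(A,B)\le(2k-1)\big(d(v)+|R'|\big)-d(v)=(2k-2)d(v)+(2k-1)|R'|,$$
which is precisely the claimed bound. I do not expect a real obstacle in this lemma: the argument is a direct combination of the inclusion–exclusion inequality (\ref{eq-1}) with Theorem~\ref{thm1.1}, and the only point demanding a line of care is verifying $|N(v)\cup R'|\ge 24k$ so that Theorem~\ref{thm1.1} is legitimately applicable, which is immediate from Lemma~\ref{lem2.4} and the numerical hypotheses on $n$ and $\lambda$.
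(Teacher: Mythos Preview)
Your argument is correct and in fact cleaner than the paper's own proof. The paper proceeds by a case split: either no two vertices of $N(v)\setminus R'$ share a neighbour in $R'\setminus\{v\}$ (so the bipartite edge count is trivially at most $|R'|$), or one can find a quadrilateral $vv_1v_0v_2$ with $v_1,v_2\in N(v)\setminus R'$ and $v_0\in R'\setminus\{v\}$; having spent one cycle, the remaining bipartite graph between $N(v)\setminus(R'\cup\{v_1,v_2\})$ and $R'\setminus\{v,v_0\}$ admits at most $k-2$ independent cycles, and Theorem~\ref{thm1.1} applied with parameter $k-1$ gives a factor $2k-3$, after which the deleted edges are added back to reach the stated bound. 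Your route avoids this detour entirely: you apply Theorem~\ref{thm1.1} once to $G^{\ast}[N(v)\cup R']$ with the original parameter $k$ and then exploit the observation that $v$ has full degree $d(v)$ inside this subgraph, so peeling off $v$ already converts the $(2k-1)d(v)$ contribution into $(2k-2)d(v)$. Both arguments land on the same inequality; yours is shorter and uses only the ambient forbidden structure, while the paper's proof illustrates the ``find a cycle and reduce $k$'' mechanism that is not actually needed here.
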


\begin{proof}
Consider two arbitrary vertices $v_{1},v_{2}\in N(v)\setminus R'.$
If they always have no common neighbours in $R'\setminus\{v\}$, then $$e\big(N(v)\setminus R', R'\setminus\{v\}\big)\leq|R'\setminus\{v\}|\leq|R'|\leq(2k-2)d(v)+(2k-1)|R'|$$
for $k\geq1,$ as desired.
Otherwise there exist two vertices $v_{1}, v_{2}\in N(v)\setminus R'$, which have a common neighbour $v_{0}$ in $R'\setminus\{v\}.$
Then $vv_{1}v_{0}v_{2}$ forms a quadrilateral.
Let $G=G^{\ast}[N(v)\setminus\big(R'\cup\{v_{1},v_{2}\}\big), R'\setminus\{v,v_{0}\}]$ be the bipartite subgraph of $G^{\ast}$, whose edge set consists of the edges from $N(v)\setminus\big(R'\cup\{v_{1},v_{2}\}\big)$ to $R'\setminus\{v,v_{0}\}.$ Then $G$ contains at most $k-2$ independent cycles.
By Lemma \ref{lem2.4},
we have $d(v)>\frac13\lambda n>48k^2$,
and so $|G|=|N(v)\cup R'|-4>24k$.
Thus by Theorem \ref{thm1.1}, we have
\begin{eqnarray*}
e(G)\leq f(|G|,k-1)=(2k-3)(|G|-k+1)\leq (2k-3)|G|\leq (2k-3)\big(|N(v)|+|R'|\big).
\end{eqnarray*}
It follows that
\begin{eqnarray*}
e\big(N(v)\setminus R', R'\setminus\{v\}\big)&=&e(G)+\sum_{i=1}^{2}d_{R'\setminus\{v\}}(v_{i})+d_{N(v)\setminus R'}(v_{0})-2\\
&\leq& e(G)+2|R'|+|N(v)\setminus R'|\\
&\leq&(2k-2)d(v)+(2k-1)|R'|.
\end{eqnarray*}
This completes the proof.
\end{proof}

Recall that $x_v\geq\frac1{4k}x_{u^*}$ for
each vertex $v\in R''$.
The following lemma indicates that the vertices in $R''$ have high degrees.

\begin{lemma}\label{lem2.6}
Let $k\geq2$ and $n\geq\frac{16(2k-1)}{\lambda^{2}}$.
If $v\in R''$ with $x_{v}=\mu x_{u^{\ast}}$,
then $d(v)>\big(\mu-\frac{1}{12k}\big)n$.
\end{lemma}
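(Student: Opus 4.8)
The plan is to bound $\rho^2x_v$ from above in terms of $d(v)$ and compare with the lower bound $\rho^2x_v\ge(2k-1)n\,x_v$ supplied by Lemma \ref{lem2.1}; writing $x_v=\mu x_{u^\ast}$, the inequality $d(v)>(\mu-\frac1{12k})n$ will then drop out. The starting point is the walk identity
$$\rho^2x_v=d(v)x_v+\sum_{u\in N(v)}d_{N(v)}(u)x_u+\sum_{u\in N^2(v)}d_{N(v)}(u)x_u,$$
obtained by counting walks of length two from $v$. Here $d(v)x_v=\mu\,d(v)x_{u^\ast}$ is the main term and the two sums form the error. First I would note that $v\in R''$ and Lemma \ref{lem2.1} give $d(v)\ge\rho x_v/x_{u^\ast}\ge\frac1{4k}\sqrt{(2k-1)n}\ge24k$ (using $n\ge\frac{16(2k-1)}{\lambda^2}$), so Theorem \ref{thm1.1} applies to $G^\ast[N(v)\cup\{v\}]$ and, exactly as in the proof of Lemma \ref{lem2.4}, yields $e(N(v))\le(2k-2)d(v)$; this is the workhorse estimate.

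For the first sum I would split $N(v)$ along $R'$. On $N(v)\setminus R'$ every $x_u\le4\lambda x_{u^\ast}$, so that part is at most $4\lambda\cdot2e(N(v))\,x_{u^\ast}\le8\lambda(2k-2)d(v)x_{u^\ast}$, which is negligible since $\lambda=\frac1{120k^2}$. On $N(v)\cap R'$ I would use the exact identity $\sum_{u\in N(v)\cap R'}d_{N(v)}(u)=2e(N(v)\cap R')+e\big(N(v)\cap R',N(v)\setminus R'\big)$; Theorem \ref{thm1.1} bounds $e(N(v)\cap R')$ by $O(k|R'|+k^2)$, and Lemma \ref{lem2.5} (applicable since $N(v)\cap R'\subseteq R'\setminus\{v\}$) bounds $e\big(N(v)\cap R',N(v)\setminus R'\big)$ by $(2k-2)d(v)+(2k-1)|R'|$. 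Since $|R'|\le\frac{6k}{\lambda}$ by Lemma \ref{lem2.4}, this whole sum is at most $\big((2k-2)d(v)+O(k^5)\big)x_{u^\ast}$. The crucial point of splitting at $R'$ rather than bounding all terms by $x_{u^\ast}$ is that it replaces the naive coefficient $(4k-4)$ in front of $d(v)$ by $(2k-2)$; this factor-of-two saving is precisely what lets the final inequality close.

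For the second sum I would split along $R$. On $N^2(v)\setminus R$ every $x_u\le\lambda x_{u^\ast}$ and $e\big(N(v),N^2(v)\setminus R\big)\le e(G^\ast)\le(2k-1)n$ by Theorem \ref{thm1.1}, contributing at most $(2k-1)\lambda n\,x_{u^\ast}$, again negligible. The part over $N^2(v)\cap R$ is the delicate one: rerunning the argument of Lemma \ref{lem2.3} (apply Theorem \ref{thm1.1} to $G^\ast[N(v)\cup\{v\}\cup R]$ and subtract $d(v)$) gives $e\big(N(v),N^2(v)\cap R\big)\le(2k-2)d(v)+(2k-1)|R|$, and Lemma \ref{lem2.2} turns $(2k-1)|R|$ into the lower-order term $O\big((2k-1)\sqrt{(2k-1)n}\big)=O((2k-1)\lambda n)$. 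Assembling everything, dividing by $x_{u^\ast}$ and using $\rho^2\ge(2k-1)n$, one arrives at $(2k-1)n\mu\le(\mu+2k-2)d(v)+\varepsilon n$ with $\varepsilon\le\frac1{12}$ once $\lambda=\frac1{120k^2}$ and the lower bound on $n$ are used. Rearranging and using $\mu\ge\frac1{4k}$ and $k\ge2$ — so that $\mu(1-\mu)+\frac{\mu}{12k}+\frac{2k-2}{12k}\ge\frac{k-1}{6k}\ge\frac1{12}\ge\varepsilon$ — yields $d(v)>(\mu-\frac1{12k})n$.

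The main obstacle I anticipate is the $N^2(v)\cap R$ term. The crude bound $e\big(N(v),N^2(v)\cap R\big)\le(2k-2)d(v)+(2k-1)|R|$ would leave $d(v)$ with coefficient $(4k-4)$ on the right-hand side, which is not good enough when $\mu$ is as small as $\frac1{4k}$: one must instead show that this contribution is $o(d(v))$, i.e.\ that the heavy vertex $v$ misses only few vertices of $R$. The tools for this are the sparseness $e(G^\ast)\le(2k-1)n$ together with $d(v)>\frac13\lambda n$ from Lemma \ref{lem2.4}, which force the second neighbourhood to carry asymptotically no Perron weight relative to the main term. Making this last estimate tight is where the care is needed, and it is what fixes the constant $\frac1{12k}$ in the statement.
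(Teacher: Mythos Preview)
Your overall plan---bound $\rho^2x_v$ above via the walk identity, below via Lemma~\ref{lem2.1}, and compare---is exactly the paper's. But there is a genuine inconsistency in your assembly. After splitting the first sum along $R'$ and the second along $R$, you obtain (as you yourself record) a term $(2k-2)d(v)x_{u^\ast}$ from the $N(v)\cap R'$ piece \emph{and} another $(2k-2)d(v)x_{u^\ast}$ from the $N^2(v)\cap R$ piece. That puts the coefficient of $d(v)$ at $\mu+4k-4$, not the $\mu+2k-2$ you then use when you write $(2k-1)n\mu\le(\mu+2k-2)d(v)+\varepsilon n$ and run the endpoint analysis. With coefficient $\mu+4k-4$ the function $\mu\mapsto(2k-1)\mu-(\mu+4k-4)(\mu-\tfrac1{12k})$ is negative near $\mu=\tfrac1{4k}$, so no contradiction follows. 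Your final paragraph flags exactly this obstacle, but the proposed remedy---``show the $N^2(v)\cap R$ contribution is $o(d(v))$ using sparseness and $d(v)>\tfrac13\lambda n$''---does not work: the bound $(2k-2)d(v)$ coming from Theorem~\ref{thm1.1} applied to $G^\ast[N(v)\cup\{v\}\cup R]$ is sharp up to constants and cannot be pushed to $o(d(v))$ with those tools alone.

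The paper's fix is simple but decisive: do \emph{not} treat the two sums separately. Set $M=N(v)\cup N^2(v)$ and split the single quantity $\sum_{u\in M}d_{N(v)}(u)x_u$ along $R'$. The part over $M\setminus R'$ is at most $4\lambda\,e(M\setminus R',N(v))x_{u^\ast}\le 8\lambda(2k-1)n\,x_{u^\ast}$ via (\ref{eq-1}); the part over $M\cap R'\subseteq R'\setminus\{v\}$ is at most $e(R'\setminus\{v\},N(v))x_{u^\ast}$, and by (\ref{eq-2}) together with a \emph{single} application of Lemma~\ref{lem2.5} this is at most $\big((2k-2)d(v)+(2k-1)|R'|+2e(R')\big)x_{u^\ast}$. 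Thus only one copy of $(2k-2)d(v)$ appears, the coefficient $\mu+2k-2$ is legitimate, and your endpoint computation (equivalently the paper's function $g(\mu)=\mu-\mu^2+\tfrac{\mu+2k-2}{12k}\ge\tfrac{2k-1}{12k}$) finishes the job. The underlying point is that the edges from $N(v)\setminus R'$ into $N(v)\cap R'$ (needed for the first sum) and into $N^2(v)\cap R'$ (needed for the second sum) together form $e(N(v)\setminus R',R'\setminus\{v\})$, which Lemma~\ref{lem2.5} bounds in one shot; splitting them and invoking Lemma~\ref{lem2.5} or Lemma~\ref{lem2.3} twice throws this overlap away and doubles the loss.
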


\begin{proof}
By the definition of $R''$ and $x_{v}=\mu x_{u^{\ast}}$, we know that $\frac{1}{4k}\leq\mu\leq1$.
Suppose to the contrary that there exists a vertex $v\in R''$ with $d(v)\leq\big(\mu-\frac{1}{12k}\big)n.$
Then by Lemma \ref{lem2.1}, we have
\begin{eqnarray}\label{eq11}
\rho^{2}x_{v}\geq(2k-1)nx_{v}.
\end{eqnarray}
Denote $M=N(v)\cup N^2(v)$.
Since $x_u\leq4\lambda x_{u^*}$ for each $u\in M\setminus R'$, we have
\begin{eqnarray}\label{eq12}
\rho^{2}x_{v} &=& d(v)x_{v}+\sum\limits_{u\in M\setminus R'}d_{N(v)}(u)x_{u}+\sum\limits_{u\in M\cap R'}d_{N(v)}(u)x_{u}\nonumber\\
&\leq & d(v)x_{v}+e\big(M\setminus R',N(v)\big)\cdot4\lambda x_{u^{\ast}}+e\big(M\cap R',N(v)\big)x_{u^{\ast}}.
\end{eqnarray}
Note that $(M\setminus R')\cap N(v)=N(v)\setminus R'.$
By (\ref{eq-1}), we obtain that
\begin{eqnarray}\label{eq13}
e\big(M\setminus R',N(v)\big)\leq e(M)+e\big(N(v)\setminus R'\big)
\leq 2e(G^*)\leq 2f(n,k)\leq 2(2k-1)n.
\end{eqnarray}
On the other hand, since $M\cap R'\subseteq R'\setminus\{v\}$, it follows from ({\ref{eq-2}}) that
\begin{eqnarray}\label{eq14}
e\big(M\cap R',N(v)\big)&\leq& e\big(R'\setminus\{v\},N(v)\big)\nonumber\\
&\leq& e\big(R'\setminus\{v\},N(v)\setminus R'\big)+2e\big(R'\cap N(v)\big)+e\big(R'\setminus N(v), R'\cap N(v)\big)\nonumber\\
&\leq& (2k-2)d(v)+(2k-1)|R'|+2e(R'),
\end{eqnarray}
where the last inequality follows from Lemma \ref{lem2.5} and $e\big(R'\cap N(v)\big)+e\big(R'\setminus N(v), R'\cap N(v)\big)\leq e(R').$
If $|R'|\geq 24k$, then
by Theorem \ref{thm1.1}, $e(R')\leq (2k-1)|R'|,$ and by
Lemma \ref{lem2.4}, we have
\begin{eqnarray}\label{eq15}
(2k-1)|R'|+2e(R')\leq 3(2k-1)\frac{6k}{\lambda}.
\end{eqnarray}
Note that $\lambda\leq\frac1{120k}$.
If $|R'|<24k$, then $e(R')<{24k\choose 2}$ and
(\ref{eq15}) also holds.
Now combining the inequalities (\ref{eq11})-(\ref{eq15}), we get that
\begin{eqnarray*}
\Big((2k-1)n-d(v)\Big)x_{v}\leq\Big(\rho^{2}-d(v)\Big)x_{v}
\leq\Big(2(2k-1)n\cdot4\lambda+18k\big(2k-1\big)\frac{1}{\lambda}+(2k-2)d(v)\Big)x_{u^{\ast}}.
\end{eqnarray*}
Recall that $x_{v}=\mu x_{u^{\ast}}$ and $d(v)\leq\big(\mu-\frac{1}{12k}\big)n$.
Then
\begin{eqnarray*}
\Big((2k-1)n-\big(\mu-\frac{1}{12k}\big)n\Big)\mu\leq8(2k-1)\lambda n
+18k\big(2k-1\big)\frac{1}{\lambda}+(2k-2)\big(\mu-\frac{1}{12k}\big)n.
\end{eqnarray*}
Equivalently, we have
\begin{eqnarray}\label{eq18}
\Big(\mu-\mu^{2}+\frac{\mu+2k-2}{12k}\Big)n\leq8(2k-1)\lambda n+18k\big(2k-1\big)\frac{1}{\lambda}.
\end{eqnarray}
Let $g(\mu)=\mu-\mu^{2}+\frac{\mu+2k-2}{12k}$,
where $\frac{1}{4k}\leq\mu\leq1$.
Clearly, $g(\mu)$ is a convex function on $\mu$. Hence
\begin{eqnarray}\label{eq19}
\min\limits_{\frac{1}{4k}\leq\mu\leq1}g(\mu)
=\min\Big\{g\big(\frac{1}{4k}\big),g(1)\Big\}=\frac{2k-1}{12k}.
\end{eqnarray}
Combining (\ref{eq18}) and (\ref{eq19}), we can see that
\begin{eqnarray*}
\frac{(2k-1)n}{12k}\leq8(2k-1)\lambda n+18k\big(2k-1\big)\frac{1}{\lambda},
\end{eqnarray*}
that is, $\frac{n}{12k}\leq8\lambda n+\frac{18k}{\lambda}.$
One can easily check that this
contradicts the assumption $n\geq\frac{16(2k-1)}{\lambda^{2}}$,
since $\lambda=\frac{1}{120k^2}$.
\end{proof}

By Lemma \ref{lem2.6} and the definition of $R''$,
we know that $d(v)>\big(\frac{1}{4k}-\frac{1}{12k}\big)n=\frac{1}{6k}n$
for each $v\in R''$.
We now present a more refined evaluation on the degrees of vertices in $R''$.
We will observe that there exists no a vertex $v\in R''$ with
$d(v)\in \big(\frac{1}{6k}n,(1-\frac{5}{12k})n\big)$.

\begin{lemma}\label{lem2.7}
Let $k\geq2$ and $n\geq\frac{16(2k-1)}{\lambda^{2}}$. Then
$d(v)\geq(1-\frac{5}{12k})n$ for each $v\in R''$.
\end{lemma}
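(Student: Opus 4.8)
The plan is to argue by contradiction. Assume some $v\in R''$ has $d(v)<\big(1-\frac{5}{12k}\big)n$; combined with Lemma \ref{lem2.6} and the definition of $R''$ this forces $\frac{1}{6k}n<d(v)<\big(1-\frac{5}{12k}\big)n$, so the set $Q:=V(G^{\ast})\setminus\big(N(v)\cup\{v\}\big)$ of non-neighbours of $v$ satisfies $|Q|=n-1-d(v)>\frac{5}{12k}n-1$. Since Lemma \ref{lem2.2} bounds $|R|$ by $2\sqrt{(2k-1)n}$, which is far smaller than $|Q|$, we may choose $w\in Q$ with $x_{w}$ minimum; then $w\notin R$, so $x_{w}\leq\lambda x_{u^{\ast}}<\frac{1}{4k}x_{u^{\ast}}\leq x_{v}$. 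I would also record two easy facts: $G^{\ast}$ is connected (joining two components by an edge creates no cycle through that edge, hence cannot raise the number of independent cycles, yet strictly increases $\rho$); and $G^{\ast}$ has \emph{exactly} $k-1$ independent cycles (if it had at most $k-2$, adding any edge would yield at most $k-1$ independent cycles while strictly increasing $\rho$, contradicting maximality).

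Next I would perform the local switch that moves $w$ onto the neighbourhood of $v$: set
\[
G':=G^{\ast}-\{wy:y\in N(w)\}+\{wz:z\in N(v)\}.
\]
Evaluating the quadratic form of $A(G')$ at the Perron vector $X$ of $G^{\ast}$, and using the eigenvalue equations $\sum_{z\in N(v)}x_{z}=\rho x_{v}$ and $\sum_{y\in N(w)}x_{y}=\rho x_{w}$,
\[
X^{\top}A(G')X-X^{\top}A(G^{\ast})X=2x_{w}\Big(\sum_{z\in N(v)}x_{z}-\sum_{y\in N(w)}x_{y}\Big)=2x_{w}\rho\,(x_{v}-x_{w})>0,
\]
so $\rho(G')\geq X^{\top}A(G')X/X^{\top}X>\rho(G^{\ast})=\rho$. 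Moreover $|V(G')|=n$, and if $G'$ is disconnected it may be reconnected by edges lying on no cycle, so we may assume $G'$ is a connected $n$-vertex graph with $\rho(G')>\rho$. Hence, to contradict the maximality of $G^{\ast}$, it suffices to show that $G'$ contains no $k$ independent cycles.

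This last assertion is the heart of the proof, and the step I expect to be the main obstacle. Suppose $G'$ had independent cycles $C_{1},\dots,C_{k}$. At most one, say $C_{1}$, passes through $w$, and $C_{2},\dots,C_{k}$ then lie in $G'-w=G^{\ast}-w\subseteq G^{\ast}$; the cycle $C_{1}$ reaches $w$ through two vertices $a,b\in N_{G'}(w)=N(v)$, joined by a path $\pi$ in $G^{\ast}-w$. Since $va,vb\in E(G^{\ast})$, one "reroutes through $v$": if $v\notin V(\pi)$ then the edge $va$, the path $\pi$, and the edge $bv$ form a cycle in $G^{\ast}$; if $v$ lies on $\pi$, a proper sub-cycle through $v$ still lies in $G^{\ast}$. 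Together with $C_{2},\dots,C_{k}$ this gives $k$ independent cycles in $G^{\ast}$, a contradiction — except in the degenerate configurations where $v$ lies on some $C_{i}$ ($i\geq2$), or where $C_{1}$ is a $4$-cycle $w\text{-}a\text{-}v\text{-}b\text{-}w$ with $a\not\sim b$. These are ruled out using the facts assembled above: $C_{2},\dots,C_{k}$ form a maximum independent family in $G^{\ast}$, $d(v)>\frac{1}{6k}n$ leaves $v$ with an abundance of neighbours untouched by $C_{1},\dots,C_{k}$, and $G^{\ast}[N(v)\cup\{v\}]$ itself has at most $k-1$ independent cycles by Theorem \ref{thm1.1}; exploiting these one locates inside $N(v)\cup\{v\}$ a cycle independent from $C_{2},\dots,C_{k}$, again a contradiction. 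Carrying out this final case analysis rigorously is where the genuine work lies; everything else is routine. (Alternatively one may attempt the eigenvalue-equation estimate of Lemma \ref{lem2.6}, bounding $\rho^{2}x_{v}=d(v)x_{v}+\sum_{u\in N(v)}d_{N(v)}(u)x_{u}+\sum_{u\in N^{2}(v)}d_{N(v)}(u)x_{u}$ from above via Theorem \ref{thm1.1} applied to $G^{\ast}[N(v)\cup\{v\}]$ and $G^{\ast}[N(v)\cup R'\cup\{v\}]$ and comparing with $\rho^{2}x_{v}\geq(2k-1)nx_{v}$; this settles the vertices $v$ with $x_{v}$ close to $x_{u^{\ast}}$, but for $v$ of only moderate Perron weight the switching idea above seems to be needed.)
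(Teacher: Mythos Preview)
The switching approach has a genuine gap precisely where you flag it. In the configuration $w\in V(C_{1})$ and $v\in V(C_{j})$ for some $j\geq 2$, rerouting $C_{1}$ through $v$ yields a cycle $C_{1}'$ that \emph{shares $v$ with $C_{j}$}, so $C_{1}',C_{2},\dots,C_{k}$ are not independent in $G^{\ast}$. None of the facts you list repairs this: that $G^{\ast}$ has exactly $k-1$ independent cycles is perfectly consistent with $C_{2},\dots,C_{k}$ being such a family; that $v$ has many neighbours outside $\bigcup_{i}V(C_{i})$ gives edges but no cycle disjoint from $C_{2},\dots,C_{k}$; and Theorem~\ref{thm1.1} only bounds $e(N(v))$. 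Concretely, since $v$ and $w$ are non-adjacent twins in $G'$, any matching $\{a_{1}b_{1},a_{2}b_{2}\}$ inside $N_{G^{\ast}}(v)$ produces two independent triangles $va_{1}b_{1}$ and $wa_{2}b_{2}$ in $G'$; if $G^{\ast}-\{v,w,a_{1},b_{1},a_{2},b_{2}\}$ already contains $k-2$ independent cycles (which is compatible with $G^{\ast}$ having no $k$ independent cycles), then $G'$ has $k$ independent cycles while your rerouting returns only $va_{1}b_{1}$ and $va_{2}b_{2}$, meeting at $v$. Nothing in your setup excludes this, so the implication ``$G'$ has $k$ independent cycles $\Rightarrow$ $G^{\ast}$ does'' is not established, and the whole switch collapses.

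The paper avoids this entirely by a purely analytic argument, and the twist is the one you dismiss in your parenthetical: one expands $\rho^{2}x_{u^{\ast}}$ (at $u^{\ast}$, not at $v$) as $d(u^{\ast})x_{u^{\ast}}+\sum_{u}d_{N(u^{\ast})}(u)x_{u}$, bounds each $x_{u}$ by $x_{u^{\ast}}$, and then \emph{subtracts back} the overcount on the single term $u=v$, namely $d_{N(u^{\ast})}(v)(x_{u^{\ast}}-x_{v})$. Lemma~\ref{lem2.6} already forces $d(u^{\ast})>(1-\tfrac{1}{12k})n$ and $d(v)>(\nu-\tfrac{1}{12k})n$ where $x_{v}=\nu x_{u^{\ast}}$, so $d_{N(u^{\ast})}(v)>(\nu-\tfrac{1}{6k})n$; if $\nu<1-\tfrac{1}{3k}$ this correction term is of order $n/k$, which swamps the $O(\lambda n+k/\lambda)$ slack and contradicts $\rho^{2}\geq(2k-1)n$. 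So it is exactly the vertices of ``moderate Perron weight'' that are handled by the eigen-equation, provided one applies it at $u^{\ast}$ and isolates $v$'s contribution rather than expanding at $v$.
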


\begin{proof}
By Lemma \ref{lem2.6}, it suffices to show that $x_{v}\geq\big(1-\frac{1}{3k}\big)x_{u^{\ast}}$
for each $v\in R''$.
Suppose to the contrary that there exists a vertex $v\in R''$
with $x_{v}=\nu x_{u^*}<\big(1-\frac{1}{3k}\big)x_{u^{\ast}}$.

Denote $M=V(G^*)\setminus\{u^*\}$.
Recall that $x_u>4\lambda x_{u^*}$ for each $u\in R'$. Then
\begin{eqnarray}
\rho^{2}x_{u^{\ast}} &=& d(u^{\ast})x_{u^{\ast}}+\sum\limits_{u\in M\cap R'}d_{N(u^{\ast})}(u)x_{u}+\sum\limits_{u\in M\setminus R'}d_{N(u^{\ast})}(u)x_{u}\label{eq20}\\
&\leq & d(u^{\ast})x_{u^{\ast}}+e\big(M\cap R',N(u^*)\big)x_{u^{\ast}}+
e\big(M\setminus R',N(u^*)\big)\cdot4\lambda x_{u^{\ast}}.\label{eq21}
\end{eqnarray}
Since $v\in R''\setminus\{u^*\}$ and $R''\subseteq R'$,
we have $v\in M\cap R'$. One can see that
$x_v$ appears $d_{N(u^*)}(v)$ times
in (\ref{eq20}), and is enlarged to $x_{u^*}$ in (\ref{eq21}).
Thus,
\begin{eqnarray}\label{eq22}
\rho^{2}x_{u^{\ast}}\leq \Big(d(u^{\ast})+e\big(M\cap R',N(u^*)\big)\Big)x_{u^{\ast}}
+e\big(M\setminus R',N(u^*)\big)\cdot4\lambda x_{u^{\ast}}+d_{N(u^{\ast})}(v)(x_{v}-x_{u^{\ast}}).
\end{eqnarray}
Note that $R'\setminus M=\{u^*\}$. Then we have $d(u^{\ast})+e\big(M\cap R',N(u^*)\big)=e\big(R',N(u^{\ast})\big).$
Moreover, notice that $R'\cup (M\setminus R')=V(G^{*}),$ then by (\ref{eq-1}) and Theorem \ref{thm1.1}, we have
\begin{eqnarray}\label{eq24}
e\big(R',N(u^{\ast})\big)+e\big(M\setminus R',N(u^*)\big)
=e\big(V(G^{*}), N(u^*)\big)\leq e\big(G^{*})+e\big(N(u^*)\big)\leq2f(n,k).
\end{eqnarray}
It follows from (\ref{eq24}) that
\begin{eqnarray}\label{eq-24}
&&d(u^{\ast})+e\big(M\cap R',N(u^*)\big)+e\big(M\setminus R',N(u^*)\big)\cdot4\lambda \nonumber\\
&\leq& e\big(R',N(u^{\ast})\big)+\Big(2f(n,k)-e\big(R',N(u^{\ast})\big)\Big)\cdot4\lambda \nonumber\\
&=&(1-4\lambda)e\big(R',N(u^{\ast})\big)+8\lambda f(n,k).
\end{eqnarray}
By (\ref{eq-1}) and Theorem \ref{thm1.1}, we have
\begin{eqnarray}\label{eq23}
e\big(R',N(u^{\ast})\big)\leq e\big(R'\cup N(u^*)\big)+e\big(R'\cap N(u^*)\big)
\leq e(G^{*})+e(R')\leq f(n,k)+e(R').
\end{eqnarray}
Combining (\ref{eq22})-(\ref{eq23}), we can see that
$$\rho^{2}x_{u^{\ast}}
\leq \Big((1+4\lambda)f(n,k)+e(R')\Big) x_{u^{\ast}}+d_{N(u^{\ast})}(v)(x_{v}-x_{u^{\ast}}).$$
Recall that
$f(n,k)\leq(2k-1)n$, and $x_v=\nu x_{u^{\ast}}$, where $\frac1{4k}\leq v<1-\frac1{3k}$.
Moreover, by Lemma \ref{lem2.4}, $|R'|\leq\frac{6k}\lambda$,
and so $e(R')\leq \max\Big\{(2k-1)|R'|,{24k\choose 2}\Big\}
\leq(2k-1)\frac{6k}\lambda.$ Hence
$$\rho^{2}x_{u^{\ast}}
\leq \Big((1+4\lambda)(2k-1)n+(2k-1)\frac{6k}\lambda\Big) x_{u^{\ast}}+d_{N(u^{\ast})}(v)(\nu-1)x_{u^{\ast}}.$$
Combining $\rho^2\geq(2k-1)n$, we can get that
\begin{eqnarray}\label{eq25}
(1-\nu)d_{N(u^{\ast})}(v)\leq(2k-1)\frac{6k}\lambda+4(2k-1)\lambda n.
\end{eqnarray}
Moreover, by Lemma \ref{lem2.6}, $d(u^*)>\big(1-\frac1{12k}\big)n$
and $d(v)>\big(\nu-\frac1{12k}\big)n.$
It follows that $d_{N(u^{\ast})}(v)>\big(\nu-\frac1{6k}\big)n.$
Combining (\ref{eq25}), we have
\begin{eqnarray}\label{eq26}
(1-\nu)\Big(\nu-\frac1{6k}\Big)n-4(2k-1)\lambda n<\Big(2k-1\Big)\frac{6k}\lambda.
\end{eqnarray}
Define $h(\nu)=(1-\nu)\big(\nu-\frac1{6k}\big)$.
Then $h(\nu)$ is a convex function on $\nu$.
Hence, $(1-\nu)\big(\nu-\frac1{6k}\big)
\geq\min\{h\big(\frac{1}{4k}\big),h\big(1-\frac{1}{3k}\big)\}
=\frac1{12k}-\frac1{48k^2}.$
On the other hand, recall that $\lambda=\frac{1}{120k^2}$,
then $4(2k-1)\lambda\leq 8k\lambda=\frac1{15k}$.
Consequently,
\begin{eqnarray*}
(1-\nu)\Big(\nu-\frac1{6k}\Big)n-4(2k-1)\lambda n
\geq\Big(\frac1{12k}-\frac1{48k^2}-\frac1{15k}\Big)n\geq\frac{n}{160k}.
\end{eqnarray*}
Combining (\ref{eq26}), we have
$\frac{n}{160k}<\Big(2k-1\Big)\frac{6k}\lambda,$
which contradicts the fact that $n\geq \frac{16(2k-1)}{\lambda^2}$.
\end{proof}

We now prove that $|R''|=O(k)$.
In fact, we prove an exact value of $|R''|$.
To do it, we also need to use the following inequality,
which can be proved by induction or double counting.

\begin{lemma}\label{lem2.8}
If $S_{1},\ldots,S_{k}$ are $k$ finite sets, then $|S_{1}\cap\cdots\cap S_{k}|\geq \sum_{i=1}^{k}|S_{i}|-(k-1)|\cup_{i=1}^{k}S_{i}|.$
\end{lemma}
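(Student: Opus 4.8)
The plan is to prove the bound by a double-counting argument, keeping a short induction in reserve as an alternative. First I would fix the ground set $U=\bigcup_{i=1}^{k}S_{i}$ and, for each $x\in U$, set $t(x)=|\{i: x\in S_{i}\}|$, the number of sets containing $x$; note $1\le t(x)\le k$. Counting element--set incidences in two ways gives $\sum_{i=1}^{k}|S_{i}|=\sum_{x\in U}t(x)$, while $|S_{1}\cap\cdots\cap S_{k}|=|\{x\in U: t(x)=k\}|$ and $|U|=\sum_{x\in U}1$.

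With these identities the claimed inequality becomes a term-by-term comparison. I would rewrite its right-hand side as $\sum_{x\in U}\bigl(t(x)-(k-1)\bigr)$ and its left-hand side as $\sum_{x\in U}[t(x)=k]$, where $[\,\cdot\,]$ denotes the Iverson bracket (equal to $1$ if the condition holds and $0$ otherwise), and then verify $[t(x)=k]\ge t(x)-(k-1)$ for every single $x\in U$. This is immediate from $t(x)\le k$: both sides equal $1$ when $t(x)=k$, both equal $0$ when $t(x)=k-1$, and when $t(x)\le k-2$ the left side is $0$ while the right side is at most $-1$. Summing over $x\in U$ yields the lemma.

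As an alternative I would induct on $k$. For $k=1$ both sides equal $|S_{1}|$. For the inductive step, put $T=S_{1}\cap\cdots\cap S_{k-1}$ and apply inclusion--exclusion for two sets: $|T\cap S_{k}|=|T|+|S_{k}|-|T\cup S_{k}|\ge |T|+|S_{k}|-|U|$, since $T\cup S_{k}\subseteq U$. The induction hypothesis gives $|T|\ge\sum_{i=1}^{k-1}|S_{i}|-(k-2)\bigl|\bigcup_{i=1}^{k-1}S_{i}\bigr|$, and since $\bigcup_{i=1}^{k-1}S_{i}\subseteq U$ and this union is subtracted, we may replace it by the larger set $U$; collapsing the two union terms gives $-(k-1)|U|$, as desired.

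I do not expect a genuine obstacle here. The only points requiring care are the direction of the inequalities when enlarging a set that appears with a minus sign (replacing $\bigcup_{i=1}^{k-1}S_{i}$ by the larger $U$ only weakens the bound in the permitted direction), and, in the double-counting version, checking the elementary inequality $[t(x)=k]\ge t(x)-(k-1)$ across all values of $t(x)$ rather than only at the extreme case $t(x)=k$.
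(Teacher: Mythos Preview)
Your proposal is correct: both the double-counting argument (via the pointwise inequality $[t(x)=k]\ge t(x)-(k-1)$) and the induction are valid and cleanly written. The paper does not actually supply a proof of this lemma, noting only that it ``can be proved by induction or double counting,'' so your two arguments are exactly the ones the authors had in mind.
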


\begin{lemma}\label{lem2.9}
Let $k\geq2$ and $n\geq\frac{16(2k-1)}{\lambda^{2}}$. Then $|R''|=2k-1$.
\end{lemma}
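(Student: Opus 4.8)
The plan is to prove the two inequalities $|R''|\le 2k-1$ and $|R''|\ge 2k-1$ separately, each by contradiction, using the structural facts already in hand: every vertex of $R''$ has nearly full degree (Lemma \ref{lem2.7}), $R''$ consists precisely of the vertices of large Perron weight, and $e(G^*)\le f(n,k)\le(2k-1)n$ (Theorem \ref{thm1.1}).

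For the upper bound, suppose $|R''|\ge 2k$ and fix distinct vertices $v_1,\dots,v_{2k}\in R''$. By Lemma \ref{lem2.7}, $d(v_i)>\big(1-\frac{5}{12k}\big)n$ for each $i$, so applying Lemma \ref{lem2.8} to the sets $N(v_1),\dots,N(v_{2k})$ gives
\[
\Big|\,\bigcap_{i=1}^{2k}N(v_i)\,\Big|\;\ge\;\sum_{i=1}^{2k}d(v_i)-(2k-1)\Big|\,\bigcup_{i=1}^{2k}N(v_i)\,\Big|\;>\;2k\Big(1-\tfrac{5}{12k}\Big)n-(2k-1)n\;=\;\tfrac{n}{6}.
\]
Since $n\ge\frac{16(2k-1)}{\lambda^2}$ is far larger than $12k$, this common neighbourhood contains $2k$ distinct vertices $w_1,w_1',\dots,w_k,w_k'$, all automatically distinct from the $v_j$ because $v_j\notin N(v_j)$. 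Then for each $i$ the four edges $v_{2i-1}w_i,\,w_iv_{2i},\,v_{2i}w_i',\,w_i'v_{2i-1}$ form a $4$-cycle, and the $k$ cycles built in this way are pairwise vertex-disjoint, contradicting that $G^*$ contains no $k$ independent cycles. Hence $|R''|\le 2k-1$.

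For the lower bound, suppose $|R''|\le 2k-2$, and evaluate $\rho^2 x_{u^*}$ by counting walks of length two from $u^*$:
\[
\rho^2 x_{u^*}=d(u^*)x_{u^*}+\sum_{u\in R''\setminus\{u^*\}}\!\big|N(u^*)\cap N(u)\big|x_u+\sum_{u\in V(G^*)\setminus R''}\!\big|N(u^*)\cap N(u)\big|x_u.
\]
In the first two terms use $\big|N(u^*)\cap N(u)\big|\le d(u^*)\le n-1$ together with $x_u\le x_{u^*}$, giving a contribution of at most $|R''|(n-1)x_{u^*}$. For the last term, $x_u<\frac1{4k}x_{u^*}$ for $u\notin R''$, while $\sum_{u\notin R''}\big|N(u^*)\cap N(u)\big|\le\sum_{w\in N(u^*)}d(w)\le 2e(G^*)\le 2(2k-1)n$ by Theorem \ref{thm1.1}, so that term is at most $\frac{2k-1}{2k}n x_{u^*}<n x_{u^*}$. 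Adding up, $\rho^2<|R''|(n-1)+n\le(2k-2)(n-1)+n<(2k-1)n$, which contradicts $\rho\ge\sqrt{(2k-1)n}$ from Lemma \ref{lem2.1}. Therefore $|R''|\ge 2k-1$, and combining the two bounds gives $|R''|=2k-1$.

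The harder half should be the lower bound: the key realization is that no finer structural description of $G^*$ is needed, and that it suffices to localize the length-two walk count at the maximum-Perron-weight vertex $u^*$ and discard the low-weight contribution via the crude global edge bound of Theorem \ref{thm1.1}. The remaining work is routine: one only checks that the assumption $n\ge 16(2k-1)/\lambda^2$ dominates every error term, i.e.\ that $\frac{n}{6}>2k$ in the cycle construction and that $(2k-1)n-(2k-2)<(2k-1)n$ leaves genuine slack — both are immediate.
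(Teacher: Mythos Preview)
Your proof is correct and follows essentially the same approach as the paper's: the upper bound via Lemmas~\ref{lem2.7}--\ref{lem2.8} and a $K_{2k,2k}$/disjoint-$C_4$ construction is identical, and the lower bound expands $\rho^2 x_{u^*}$, bounds the $R''$-contribution by $|R''|\,d(u^*)$ and the non-$R''$ contribution by $\tfrac1{4k}\cdot 2e(G^*)\le \tfrac{2k-1}{2k}n$, reaching the same contradiction with $\rho^2\ge(2k-1)n$. The only differences are cosmetic (you sum over all of $V(G^*)$ rather than restricting to $N(u^*)\cup N^2(u^*)$, and you use $d(u^*)\le n-1$ where the paper uses $d(u^*)\le n$), and your final arithmetic $(2k-2)(n-1)+n=(2k-1)n-(2k-2)<(2k-1)n$ is a harmless variant of the paper's $n\le\tfrac{2k-1}{2k}n$.
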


\begin{proof}
We first prove $|R''|\leq2k-1$.
Suppose to the contrary that $|R''|\geq2k.$
We choose $2k$ vertices $v_1,v_2,\ldots,v_{2k}\in R''$.
By Lemmas \ref{lem2.7} and \ref{lem2.8},
\begin{eqnarray*}
|N(v_1)\cap\cdots\cap N(v_{2k})| &\geq& \sum_{i=1}^{2k}d(v_{i})-(2k-1)|N(v_{1})\cup\cdots\cup N(v_{2k})|\\
&>& 2k\Big(1-\frac{5}{12k}\Big)n-(2k-1)n=\frac{n}{6}.
\end{eqnarray*}
Note that $n>12k.$ Then we can find a subgraph $K_{2k,2k}$ in $G^{\ast},$ and hence $G^{\ast}$ contains $k$ independent quadrilaterals, a contradiction. Therefore, $|R''|\leq 2k-1$.

Next we will show that $|R''|\geq2k-1$. Suppose to the contrary that $|R''|\leq2k-2$.
Define $M=N(u^*)\cup N^2(u^*)$. Then by the definition of $R''$, we have
\begin{eqnarray}
\rho^{2}x_{u^{\ast}} &=& d(u^{\ast})x_{u^{\ast}}+\sum\limits_{u\in M\cap R''}d_{N(u^{\ast})}(u)x_{u}+\sum\limits_{u\in M\setminus R''}d_{N(u^{\ast})}(u)x_{u}\label{eq27}\\
&\leq & d(u^{\ast})x_{u^{\ast}}+e\big(M\cap R'',N(u^*)\big)x_{u^{\ast}}+
e\big(M\setminus R'',N(u^*)\big)\frac{1}{4k}x_{u^{\ast}}.\label{eq28}
\end{eqnarray}
Note that $N(u^*)\subseteq M$. By (\ref{eq-1}) and Theorem \ref{thm1.1}, we have
$$e\big(M\setminus R'',N(u^*)\big)\leq e(M)+e\big(N(u^*)\setminus R''\big)
\leq 2e(G^*)\leq 2f(n,k)\leq2(2k-1)n.$$
On the other hand, notice that $M\cap R''\subseteq R''\setminus\{u^*\}$, then by (\ref{eq0}) we have $$e\big(M\cap R'',N(u^*)\big)\leq |M\cap R''|\cdot|N(u^*)|\leq|R''\setminus\{u^*\}|\cdot d(u^*).$$
Combining again (\ref{eq27}) and (\ref{eq28}), we obtain
\begin{eqnarray*}
\rho^{2}x_{u^{\ast}}
&\leq& |R''|\cdot d(u^{\ast})x_{u^{\ast}}
+2(2k-1)n\frac{1}{4k}x_{u^{\ast}}.
\end{eqnarray*}
Recall that $\rho^{2}\geq(2k-1)n$ and $|R''|\leq2k-2$. Hence we get that
\begin{eqnarray*}
(2k-1)n\leq \rho^2\leq(2k-2)n+\frac{2k-1}{2k}n,
\end{eqnarray*}
which gives $n\leq \frac{2k-1}{2k}n$,
a contradiction. Therefore, $|R''|\geq2k-1$. This completes the proof.
\end{proof}

Now, we are ready to give the final proof of Theorem \ref{thm1.2}.

\medskip
\noindent
\textbf{Proof of Theorem \ref{thm1.2}.}
\vspace{2mm}

Let $R'''=\big\{v\in V(G^{\ast}): R''\subseteq N(v)\big\}$. Then $R'''\cap R''=\varnothing$.
Moreover, $G^{\ast}$ contains $K_{2k-1,|R'''|}$ as a subgraph,
since $|R''|=2k-1$ by Lemma \ref{lem2.9}.
Furthermore, by Lemma \ref{lem2.8}, we have $d(u)\geq(1-\frac{5}{12k})n$ for each $u\in R''$. Hence
\begin{eqnarray*}
|R'''|\geq n-\frac{5}{12k}n\times|R''|>n-\frac{5}{6}n=\frac{n}{6}.
\end{eqnarray*}
Now we find a large complete bipartite subgraph $K_{2k-1,\frac n6}$ in $G^*[R'',R''']$.
This implies that we can obtain $k-1$ independent quadrilaterals,
which arbitrarily use $2k-2$ vertices in $R''$ and $2k-2$ vertices in $R'''$.

Let $R''''=V(G^{\ast})\setminus(R''\cup R''')$. In the following, we will characterize the structure of $G^{\ast}[R'''\cup R'''']$.
More precisely, we shall prove that $R'''\cup R''''$ is an independent set.
The proof is divided into three claims.

\vspace{2mm}
\noindent
{\bf Claim 1. $G^{\ast}[R'''\cup R'''']$ contains no a path such that its two endpoints have common neighbors in $R''$.}
\vspace{2mm}

Suppose to the contrary that there exists a path $P=v_1v_2\cdots v_s$ in $G^{\ast}[R'''\cup R'''']$ with $N(v_1)\cap N(v_s)\cap R''\neq\varnothing$.
We may assume that $P$ is shortest. Then $v\notin R'''$ for each $v\in V(P)\setminus\{v_1,v_s\}$.
Now, let $v_{0}$ be an arbitrary vertex in $R''$, then $P+v_{0}v_{1}+v_{0}v_{s}$ forms a cycle. Note that $|R''|=2k-1$ and $|R'''|>\frac n6$.
Whether $v_1,v_s\in R'''$ or not, $G^{\ast}[R''\setminus\{v_{0}\},R'''\setminus V(P)]$ always contains a subgraph isomorphic to $K_{2k-2,2k-2}$.
Hence there exist $k-1$ vertex-disjoint quadrilaterals in $G^{\ast}[R''\setminus\{v_{0}\},R'''\setminus V(P)]$.
Consequently, we find $k$ independent cycles in $G^*$, a contradiction.

\vspace{2mm}
\noindent
{\bf Claim 2. $G^{\ast}[R'''\cup R'''']$ is acyclic.}
\vspace{2mm}

Suppose to the contrary that there exists a cycle $C\subseteq G^{\ast}[R'''\cup R'''']$.
If $|V(C)\cap R'''|\leq1$, then $C$ and $k-1$ vertex-disjoint quadrilaterals in $G^{*}[R'', R''']$ form $k$ independent cycles in $G^{*}$, a contradiction.
If $|V(C)\cap R'''|\geq2$, say, $v_{1},v_{2}\in V(C)\cap R'''$,
then we can find a path $P$ from $v_{1}$ to $v_{2}$ along the cycle $C$.
Since $R''\subseteq N(v_1)\cap N(v_2)$, we obtain a contradiction to Claim 1.

\vspace{2mm}
\noindent
{\bf Claim 3. $G^{\ast}[R'''\cup R'''']$ contains no vertices of degree one.}
\vspace{2mm}

Otherwise, assume that there exists a vertex $v_{3}\in R'''\cup R''''$ with
$d_{R'''\cup R''''}(v)=1$. Assume that $v_{4}$ is the unique neighbour of $v_{3}$ in $R'''\cup R''''$.

If $v_{3}\in R''''$, that is, $v_{3}\notin R'''$, then there exists a vertex $v_{5}\in R''$ such that $v_{3}$ is not adjacent to $v_{5}$. By the definition of $R''$, we have $x_{v_{5}}\geq\frac{x_{u^{\ast}}}{4k}>x_{v_{4}}$.
Let $G=G^{\ast}-v_{3}v_{4}+\{v_{3}v: v\in R''\setminus N(v_{3})\}$.
Then $N_G(v_3)=R''$.
We can conclude that $G$ also has no $k$ independent cycles.
To see this, assume that $G$ contains a family $\mathbb{C}$ of $k$ independent cycles,
then $v_3$ must belong to some $C\in \mathbb{C}$.
Moreover, $C$ uses at least two vertices in $R''$.
Thus $|R''\setminus V(C)|\leq 2k-3$.
However, by Claim 2, $G^*[R'''\cup R''''$] is acyclic,
and so $G[R'''\cup R'''']$ is acyclic.
Hence each cycle in $\mathbb{C}$ uses at least one vertex in $R''$,
which implies that there exists one cycle $C'\in\mathbb{C}$
with $|V(C')\cap R''|=1.$
This contradicts Claim 1.

On the other hand,
\begin{eqnarray*}
\rho(G)-\rho(G^{\ast}) &\geq& X^{T}(A(G)-A(G^{\ast}))X
=2x_{v_{3}}\Big(\sum_{v\in R''\setminus N(v_{3})}x_{v}-x_{v_{4}}\Big)\\
&\geq& 2x_{v_{3}}(x_{v_{5}}-x_{v_{4}})>0,
\end{eqnarray*}
which contradicts the maximality of $\rho(G^{\ast})$.

If $v_{3}\in R'''$,
then there exists a longest path $P'=v_3v_4\cdots v_t$ starting at $v_3$
in $G^*[R'''\cup R''''].$
Since $G^*[R'''\cup R'''']$ is acyclic by Claim 2,
we have $d_{R'''\cup R''''}(v_t)=1$.
By the previous proof, it follows that $v_t\notin R''''$.
Thus $v_t\in R'''.$
But now, $P'$ is a path of $G^*[R'''\cup R'''']$ with both endpoints
in $R'''$, which contradicts Claim 1.

\vspace{2mm}

Combining Claim 2 and Claim 3, we conclude that
$R'''\cup R''''$ is an independent set.
Note that $G^{\ast}$ is edge-maximal
(otherwise, adding edges will increase the spectral radius).
Therefore, $R''$ must be a clique and
$e(R'',R'''\cup R'''')=|R''|\cdot|R'''\cup R''''|.$ This implies that $G^{\ast}\cong S_{n,2k-1}$.
This completes the proof.
\hspace*{\fill}$\Box$

\section{Concluding remarks}\label{se3}

%By a private communication, we know that B. Ning and his coauthors determined the maximum spectral radius of a graph which contains no $k$ independent triangles. Their result can be seen as a spectral analogue of
%Erd\H{o}s' and Moon's results (see \cite{E,M}).
%It is a natural interest to investigate extremal problem on graphs without $k$ independent quadrilaterals.
%
%Edge-extremal problem on $k$ independent quadrilaterals seems more difficult, since $ex(n,C_4)$ is still open for general $n$.
%For all that, sufficient conditions on vertex degrees were presented for a graph to contain $k$ independent quadrilaterals (see for example \cite{YL}).
%On the other hand, spectral extremal problem on $C_4$-free graphs has been completely solved (see \cite{V1,ZW}).
%This inspires us to present the following problem.
%
%\begin{prob}
%What is the maximum spectral radius of an $n$-vertex graph which contains no $k$ independent quadrilaterals?
%\end{prob}

In this paper, we prove a spectral version of Erd\H{o}s-P\'{o}sa theorem,
more precisely, we determine the unique spectral extremal graph over all $n$-vertex
graphs which contain no $k$ independent cycles.
To close this paper, we propose a general problem for further research.

\begin{prob}\label{prob1}
Given a positive integer $k$ and a connected graph $H$ on $\ell$ vertices.
What is the maximum spectral radius of an $n$-vertex graph which contains no $k$ independent copies of $H$?
\end{prob}

Problem \ref{prob1} was solved for some specific graphs $H$, see for example,
$K_2$ \cite{FYZ}, the path $P_\ell$ \cite{CLZ},
the star $S_\ell$ \cite{CLZ2}.
By a private communication, we know that B. Ning and Z.W. Wang recently solved the case that $H$ is a triangle. These results used different methods.
We would like to see more results on Problem \ref{prob1} and more methods to
deal with specific questions.

%\section*{Acknowledgement}
%We would like to show our great gratitude to the anonymous
%referees for carefully reading the manuscript and helping
%improve its presentation and accuracy.


\begin{thebibliography}{99}
\setlength{\itemsep}{0pt}

\bibitem{BG}
L. Babai, B. Guiduli, Spectral extrema for graphs: the Zarankiewicz problem,
\emph{Electron. J. Combin.} \textbf{16} (2009), no. 1, Paper 123, 8 pp.

%\bibitem{BN}
%B. Bollob\'{a}s, V. Nikiforov, Cliques and the spectral radius, \emph{J. Combin. Theory Ser. B} \textbf{97} (2007) 859--865.

\bibitem{BT}
B.N. Boots, G.F. Royle, A conjecture on the maximum value of the principal eigenvalue of a planar graph, \emph{Geogr. Anal.} \textbf{23} (1991), no. 3, 276--282.

\bibitem {BK}
N. Bushaw, N. Kettle, Tur\'{a}n numbers of multiple paths and equibipartite forests,
\emph{Combin. Probab. Comput.} \textbf{20} (2011) 837--853.

\bibitem{CAO}
D.S. Cao, A. Vince, The spectral radius of a planar graph, \emph{Linear Algebra Appl.} \textbf{187} (1993) 251--257.

\bibitem{CLZ}
M.-Z. Chen, A.-M. Liu, X.-D. Zhang, Spectral extremal results with forbidding linear forests,
\emph{Graphs Combin.} \textbf{35} (2019), no. 1, 335--351.

\bibitem{CLZ2}
M.-Z. Chen, A.-M. Liu, X.-D. Zhang, On the spectral radius of graphs without a star forest,
\emph{Discrete Math.} \textbf{344} (2021), no. 4, 112269.

\bibitem{CLY}
W.F. Chen, C.H. Lu, L.-T. Yuan, Extremal graphs for two vertex-disjoint copies of a clique, \emph{Graphs Combin.} \textbf{38} (2022), no. 3, Paper No. 67, 5 pp.

\bibitem{CFTZ}
S. Cioab\u{a}, L.H. Feng, M. Tait, X.-D. Zhang, The maximum spectral radius of graphs without friendship subgraphs,
\emph{Electron. J. Combin.} \textbf{27} (2020), no. 4, Paper 22, 19 pp.

\bibitem{CS}
S. Cioab\u{a}, D.N. Desai, M. Tait, The spectral radius of graphs with no odd wheels,
\emph{European J. Combin.} \textbf{99} (2022), Paper No. 103420, 19 pp.

\bibitem{CDT}
S. Cioab\u{a}, D.N. Desai, M. Tait, The spectral even cycle problem,
arXiv: 2205.00990v1.

\bibitem{CDT2}
S. Cioab\u{a}, D.N. Desai, M. Tait, A spectral Erd\H{o}s-S\'{o}s theorem,
arXiv: 2206.03339v1.

\bibitem{DKL}
D.N. Desai, L.Y. Kang, Y.T. Li, Z.Y. Ni, M. Tait, J. Wang, Spectral extremal graphs for intersecting cliques,
\emph{Linear Algebra Appl.} \textbf{644} (2022) 234--258.

\bibitem {D}
T. Dzido, M. Kubale, K. Piwakowski, On some Ramsey and Tur\'{a}n-type numbers for paths and cycles,
\emph{Electron. J. Combin.} \textbf{13} (2006), no. 1, Paper 55, 9 pp.

\bibitem {E}
P. Erd\H{o}s, \"{U}ber ein Extremal problem in der Graphentheorie, \emph{Arch. Math.} \textbf{13} (1962) 222--227.

\bibitem {EG}
P. Erd\H{o}s, T. Gallai, On maximal paths and circuits of graphs, \emph{Acta Math. Acad. Sci. Hungar.} \textbf{10} (1959) 337--356.

\bibitem {EP}
P. Erd\H{o}s, L. P\'{o}sa, On the maximal number of disjoint circuits of a graph, \emph{Publ. Math. Debrecen} \textbf{9} (1962) 3--12.

\bibitem {FYZ}
L.H. Feng, G.H. Yu, X.-D. Zhang, Spectral radius of graphs with given matching number, \emph{Linear Algebra Appl.} \textbf{422} (2007) 133--138.

\bibitem {FU}
Z. F\"{u}redi, An upper bound on Zarankiewicz's problem,
\emph{Combin. Probab. Comput.} \textbf{5} (1996) 29--33.

\bibitem {FG}
Z. F\"{u}redi, D.S. Gunderson, Extremal numbers for odd cycles,
\emph{Combin. Probab. Comput.} \textbf{24} (2015) 641--645.

\bibitem {FS}
Z. F\"{u}redi, M. Simonovits, The history of degenerate (bipartite) extremal
graph problems, \emph{Bolyai Soc. Studies} (The Erd\H{o}s Centennial) \textbf{25} (2013) 167--262.

\bibitem{Hou}
J. Gao, X.M. Hou, The spectral radius of graphs without long cycles, \emph{Linear Algebra Appl.} \textbf{566} (2019) 17--33.

\bibitem{GER}
D. Gerbner, A. Methuku, M. Vizer,
Generalized Tur\'{a}n problems for disjoint copies of graphs,
\emph{Discrete Math.} \textbf{342} (2019) 3130--3141.

\bibitem{GOR}
I. Gorgol, Tur\'{a}n numbers for disjoint copies of graphs
 \emph{Graphs Combin.} \textbf{27} (2011) 661--667.

\bibitem{HONG}
Y. Hong, Tree-width, clique-minors, and eigenvalues,
\emph{Discrete Math.} \textbf{274} (2004) 281--287.

\bibitem {LAN}
Y.X. Lan, Y.T. Shi, J.H. Tu, The Tur\'{a}n number of star forests,
\emph{Appl. Math. Comput.} \textbf{348} (2019) 2701--274.

\bibitem{LP}
Y.T. Li, Y.J. Peng, The spectral radius of graphs with no intersecting odd cycles,
\emph{Discrete Math.} \textbf{345} (2022), no. 8, 112907.

\bibitem {BHC}
B. Lidick\'{y}, H. Liu, C. Palmer, On the Tur\'{a}n number of forests,
\emph{Electron. J. Combin.} \textbf{20} (2013),
no. 2, Research Paper 9, 10 pp.

\bibitem{LIN}
H.Q. Lin, B. Ning, A complete solution to the Cvetkovi\'{c}-Rowlinson conjecture,
\emph{J. Graph Theory} \textbf{97} (2021) 441--450.

\bibitem{LNW}
H.Q. Lin, B. Ning, B. Wu, Eigenvalues and triangles in graphs,
\emph{Combin. Probab. Comput.} \textbf{30} (2021), no. 2, 258--270.

\bibitem{LZZ}
H.Q. Lin, M.Q. Zhai, Y.H. Zhao, Spectral radius, edge-disjoint cycles
and cycles of the same length, \emph{Electron. J. Combin.}
\textbf{29} (2022), no. 2, Paper No. 2.1, 26 pp.

\bibitem{MW}
W. Mantel, Problem 28, solution by H. Gouwentak, W. Mantel, J. Teixeira de Mattes, F. Schuh and W.A. Wythoff, \emph{Wiskundige Opgaven} \textbf{10} (1907) 60--61.

\bibitem{M}
J.W. Moon, On independent complete subgraphs in a graph, \emph{Canadian J. Math.} \textbf{20} (1968) 95--102.

\bibitem {V1}
V. Nikiforov, Bounds on graph eigenvalues II,
\emph{Linear Algebra Appl.} \textbf{427} (2007) 183--189.

\bibitem {V2}
V. Nikiforov, A spectral condition for odd cycles in graphs,
\emph{Linear Algebra Appl.} \textbf{428} (2008) 1492--1498.

\bibitem {V3}
V. Nikiforov, Some inequalities for the largest eigenvalue of a
graph, \emph{Combin. Probab. Comput.} \textbf{11} (2002) 179--189.

\bibitem {ESB}
V. Nikiforov, A spectral Erd\H{o}s-Stone-Bollob\'{a}s theorem,
\emph{Combin. Probab. Comput.} \textbf{18}
(2009) 455--458.

\bibitem {ESB1}
V. Nikiforov, Spectral saturation: inverting the spectral Tur\'{a}n theorem,
\emph{Electronic J. Combin.} \textbf{16} (2009),
no. 1, Research Paper 33, 9 pp.

\bibitem{V5}
V. Nikiforov, The spectral radius of graphs without paths and cycles of specified length,
\emph{Linear Algebra Appl.} \textbf{432} (2010) 2243--2256.

\bibitem{V6}
V. Nikiforov, A contribution to the Zarankiewicz problem, \emph{Linear Algebra Appl.}
\textbf{432} (2010) 1405--1411.

\bibitem{V7}
V. Nikiforov, The spectral radius of graphs with no $K_{2,t}$-minor,
\emph{Linear Algebra Appl.} \textbf{531} (2017) 510--515.

\bibitem{SIM}
M. Simonovits, Paul Erd\H{o}s' influence on extremal graph theory, in: The Mathematics of Paul Erd\H{o}s, II, in: Algorithms Combin., vol. 14, Springer,
Berlin, 1997, pp. 148--192.

\bibitem{TT}
M. Tait, J. Tobin, Three conjectures in extremal spectral graph theory,
\emph{J. Combin. Theory Ser. B} \textbf{126} (2017) 137--161.

\bibitem{TT2}
M. Tait, The Colin de Verdi\`{e}re parameter, excluded minors, and the spectral radius,
\emph{J. Combin. Theory Ser. A } \textbf{166} (2019) 42--58.

\bibitem {bb}
P. Tur\'{a}n, On an extremal problem in graph theory,
\emph{Mat. Fiz. Lapok} \textbf{48} (1941) 436--452.

\bibitem {WKX}
J. Wang, L.Y. Kang, Y.S. Xue, On a conjecture of spectral extremal problems, arXiv: 2203.10831.

\bibitem{WILF}
H.S. Wilf, Spectral bounds for the clique and independence numbers of graphs,
\emph{J. Combin. Theory Ser. B} \textbf{40} (1986), no. 1, 113--117.

%\bibitem {YL}
%J. Yan, G.Z. Liu, Vertex-disjoint quadrilaterals in bipartite graphs,
%\emph{ J. Systems Sci. Complexity} \textbf{17} (2004) 532--537.

\bibitem {LT}
L.-T. Yuan, X.-D. Zhang, The Tur\'{a}n number of disjoint copies of paths,
\emph{ Discrete Math.} \textbf{340} (2017) 132--139.

\bibitem{ZW}
M.Q. Zhai, B. Wang, Proof of a conjecture on the spectral radius of $C_4$-free graphs,
\emph{Linear Algebra Appl.} \textbf{437} (2012) 1641--1647.

\bibitem{ZL}
M.Q. Zhai, H.Q. Lin, Spectral extrema of graphs: forbidden hexagon,
\emph{Discrete Math.} \textbf{343} (2020), no. 10, 112028, 6 pp.

\bibitem{ZL1}
M.Q. Zhai, H.Q. Lin, Spectral extrema of $K_{s,t}$-minor free graphs --
On a conjecture of M. Tait,
\emph{J. Combin. Theory Ser. B} \textbf{157} (2022) 184--215.
\end{thebibliography}
\end{document}